\newtheorem{theorem}{Theorem}
\newtheorem{proposition}[theorem]{Proposition}
\newtheorem{conjecture}[theorem]{Conjecture}
\newtheorem{definition}[theorem]{Definition}
\newtheorem{lemma}[theorem]{Lemma}
\begin{document}

\title{Number Theories}
\author{Patrick St-Amant}
\date{}

\maketitle

\begin{abstract}We will see that key concepts of number theory can be
defined for arbitrary operations.
We give a generalized distributivity for hyperoperations (usual arithmetic operations and operations going beyond exponentiation) and a generalization of the fundamental theorem of arithmetic for hyperoperations. We also give a generalized definition of the prime numbers that are associated to an arbitrary n-ary operation and take a few steps toward the development of its modulo arithmetic by  investigating a generalized form of Fermat's little theorem. Those constructions give an interesting way to interpret diophantine equations and we will see that the uniqueness of factorization under an arbitrary operation can be linked with the Riemann zeta function. This language of generalized primes and composites can be used to restate and extend certain problems such as the Goldbach conjecture.
\end{abstract}

\section{Introduction}
Initially, the goal of the present study was to find interesting properties involving hyperoperations beyond exponentiation. One obstacle is that beyond exponentiation it becomes very difficult to have any examples involving actual integers. However, a generalization of the fundamental theorem of arithmetics has been isolated with the objective that eventually it will be possible to find a `fundamental theorem of hyperarithmetic'. This result required a more general definition of primes and this prompted many connections and avenues of investigations. It is our hope that enough steps have been taken to point out the vastness of this unknown arithmetic world and enough results have been demonstrated to indicate its approachability.

Many systems for hyperoperations have been introduced for different purposes in the literature, in particular \cite{Ackermann}, \cite{Knuth} and \cite{Conway}. Hyperoperations give rise to immense numbers which, according to Littlewood \cite{Littlewood}, fascinated investigators such as Archimedes himself. We can now construct different notations to contemplate and manipulate incredibly large numbers. In particular, we can write the Graham's number \cite{Graham} by using the Knuth arrow notation \cite{Knuth}. Another interesting application in Computer Science is the use systems of hyperoperations to handle floating point overflow \cite{Clenshaw}. Strangely, the investigation of hyperoperations has not yet become mainstream although most of number theory and Abstract Algebra rely solely on the common operations of addition and multiplication. By extending the study of number theory to arbitrary operations, it becomes possible to consider a whole  range of questions, connections and applications.

The implicit goal of this article is to go back at the initial concepts of arithmetic operations and extend them to give a richer theory which present an opportunity to cast a new light on classical concepts by studying them in a wider context. The explicit objective of this article is threefold. First, we will show that the common arithmetic operations can be extended further by the use of hyperoperations. Usually, when generalizing concepts we lose important properties. One convenient way to keep those properties is to generalize the properties. We will see that the concepts of distributivity and the
fundamental theorem of arithmetics, which can be seen as the property of uniqueness of factorization, can also be demonstrated for hyperoperations beyond the common operations.

Secondly, from those investigations naturally arises the idea of primes associated to arbitrary binary operations over the positive integers. Classically, the extension of the concept of primes or related concept of irreducibility have been extensively studied in many different area of mathematics, in particular, prime ideals which usually rely on operations involve in commutative rings is to topic of a large range of investigations \cite{Kunz} \cite{Fulton}.  We will see that this concept of generalized primes, which is dependant on the operations themselves, invites interesting connections with Diophantine equations and Dirichlet series. We will see that such a connection opens a door to use tools related purely to the study of primes for questions about Diophantine equations or Dirichlet series and vice versa.

Thirdly, in section \ref{generalsection} we will extend the definition of primes associated to binary operations by defining primes associated to arbitrary partial functions of $n$ variables over arbitrary sets. . All those generalizations can be seen as introducing a language of generalized primes and composites and invites us to rewrite and suggest diverse problems by using this language. A suggested conjecture reminds us strongly of the Goldbach conjecture. Briefly, we will see that these investigations prompt further generalizations and suggest a general context for investigations which can be categorical.

The title was chosen to point out that number theory can be done based on arbitrary operations which can differ from the common operations of addition and multiplication or from another perspective, that there is a multitude of number theories. During the present study, many doors are opened and it is the impression of the author that beyond those doors there is a exotic and wide world involving giant finite numbers and fascinating entities such as the generalized primes.

\section{Hyperoperations}

There are many ways to define what lies beyond exponentiations and
each possibility gives rise to very different systems. For clarity and
completeness we will present explanations and an independent definition of
hyperoperations.

\subsection{Preliminaries}

Let us take an arbitrary integer and denote it by $x$. In the following, we will restrict ourselves to integers, but this does not exclude definitions which would consider other types of numbers. We can
perform the addition of many objects $x$ using the symbol $+$ and
write

$$x+x+x+...+x$$

Multiplication is defined in the following manner and can be seen as a prefix notation since the number of occurrences of the $x$'s appear on the left.
$$x+x=2\cdot x$$
$$x+x+x=3\cdot x$$
$$x+x+x+x=3\cdot x$$
$$...$$

Similarly, the exponential notation is defined in the following manner and can be seen a suffix notation.

$$x\cdot x=x^2$$
$$x\cdot x\cdot x=x^3$$
$$x\cdot x\cdot x\cdot x=x^4$$
$$...$$

Instead of using the power notation, we can define exponentiation in the same format as addition and multiplication. We will take
the operation $\oplus_2$ to indicate exponentiation. Note that here we will use a suffix notation. We write:

$$x\cdot x=2\oplus_2 x$$
$$x\cdot x\cdot x=3\oplus_2 x$$
$$x\cdot x\cdot x\cdot x=4\oplus_2 x$$
$$...$$

We could go a step further by defining binary operations beyond exponentiation in the following
manner:

$$x\oplus_2 x=2\oplus_3 x$$
$$x\oplus_2 x\oplus_2 x=3 \oplus_3x$$
$$x\oplus_2 x\oplus_2 x\oplus_2 x=4 \oplus_3 x$$
$$...$$

But, we see that at this point the parenthesis becomes very important since we lose the associativity property. For example $(x\oplus_2 x)\oplus_2 x\neq x\oplus_2
(x\oplus_2 x)$ for $x>2$. Thus, in the formal definition, we will keep track of the parenthesis by using superscript notation on the operations.

It is possible to continue this pattern and define an infinite number of
new operations. After giving the formal definitions, we will investigate
some of the properties of those new binary operations.

This extended notation provides a way to investigate the
property of larger numbers. For example $(x\oplus_2 x)\oplus_2 x=x^{(x^x)}=x^{x^x}$ becomes very
large just by taking $x=10$ and  $((x\oplus_2 x)\oplus_2 x)\oplus_2 x=x^{x^{x^x}}$ becomes very large
only by taking $x=3$.

\subsection{A Definition of Hyperoperations}\label{hyperarithmetic}

For clarity, before giving a complete definition of hyperoperations, we formally define the multiplication ($\oplus_1$) and the exponentiation ($\oplus_2$) notation. Note that we will take $\mathbb{N}$ to be the positive integers positive integers $\{1,2,3,...\}$.

\begin{definition}\label{operations}For $i\in\{1,2\}$,
$n\in\mathbb{N}$, an
object $x$ and assuming
the knowledge of the usual addition and its properties, we have
that
\begin{list}{}{}
\item i) $\oplus_0$ is the usual addition operation $+$

\item ii) $x=(1)\oplus_1 x$ and $x=(1)\oplus_2 x$

\item iii) $(n)\oplus_{i+1}x\oplus_i x=(n+1)\oplus_{i+1}x$.

\item iv) $x\oplus_i(n)\oplus_{i+1}x =(n+1)\oplus_{i+1}x$.

\end{list}
\end{definition}

From the above definition arises the notation described informally
in the preceding section for addition, multiplication and
exponentiation. For example, using the identities of the
definition, we find that $x+x+x$ can be written as $3\oplus_1 x$
or as in the usual notation $3\cdot x$.

\bigskip

\begin{tabular}{lll}
$x+x+x$&=&$(1)\oplus_1 x+x+x$\\
&=&$(1+1)\oplus_1 x+x$\\
&=&$(2)\oplus_1 x+x$\\
&=&$(2+1)\oplus_1 x$\\
&=&$3\oplus_1 x$\\
&=&$3\cdot x$\\
\end{tabular}

\bigskip

As noted before, already at the exponential operation we lose
associativity. Hence, if we want to define operations beyond
the exponential we will need to not assume associativity in the definition of hyperoperations.

When we collect the $x$'s we need to devise a way for the higher operation to keep track of the parentheses. This will be done by using a superscript notation on the
higher indexed operation as follows.

First, we devise a way to write the parentheses of a formula by using a superscript notation on the operations.

\begin{definition}We define the set of \emph{hyperoperations-formulas} $H$ as:
\begin{itemize}
\item[1.] if $n\in\mathbb{N}$ then $n\in H$,

\item[2.] if $x,y\in H$ and $i\in\mathbb{N}$ then $(x\oplus_i y)\in H$,
\item[3.] if $x,y\in H$ and $i,h\in\mathbb{N}$ then $x\oplus^h_i y\in H$.
\end{itemize}
\end{definition}

\begin{definition} For $x,y\in H$ and for any index name $j,k$, we have
\begin{itemize}
\item[1.] if $x$ and $y$ are in $\mathbb{N}$, then $$(x\oplus_j y)=x\oplus_j^0 y$$

\item[2.] Let $m=max(n_x,n_y)$ such that $n_x$ and $n_y$ are the maximum integers appearing on the superscript of the operations of $x$ and $y$, respectively. If there are no parentheses in $x$ and $y$, then $$(x\oplus_k y)=x\oplus^{m+1}_k y$$

\end{itemize}
\end{definition}

The superscript indicates in what order the operation is performed
and can be used instead of parenthesis. For example, assuming that $x,y,z\in \mathbb{N}$, we have

\bigskip
\begin{center}\begin{tabular}{lll}
$(((x\oplus x)\oplus x)\oplus x)$&=&$((x\oplus^0 x\oplus x)\oplus x)$\\
&=&$(x\oplus^0 x\oplus^1 x\oplus x)$\\
&=&$x\oplus^0 x\oplus^1 x\oplus^2 x$\\
\end{tabular}\end{center}

\bigskip

and

\bigskip

\begin{center}\begin{tabular}{lll}
$((x\oplus (x\oplus x)\oplus y\oplus^4 z)$&=&$((x\oplus x\oplus^0 x)\oplus y\oplus^4 z)$\\
&=&$(x\oplus^1 x\oplus^0 x\oplus y\oplus^4 z)$\\
&=&$x\oplus^1 x\oplus^0 x\oplus^5 y\oplus^4 z$\\
\end{tabular}\end{center}

\bigskip

In the two examples, all the operations are the same, but this need not be always like this. When they are all the same we can write them using a higher operation. We will formalize the following notation in definition \ref{operations}, so that writing a higher operation keeps track of the parentheses.

$$4\oplus_{k+1}^{[0,1,2]}x=x\oplus^0 x\oplus^1 x\oplus^2 x=(((x\oplus x)\oplus x)\oplus x)$$
and
$$4\oplus_{k+1}^{[1,0,2,0]}x=x\oplus^1 x\oplus^0 x\oplus^2 x\oplus^0 x=((x\oplus (x\oplus x))\oplus (x\oplus x))$$

The formal definition of hyperoperations is given below and follows the format of our definition of multiplication and exponentiation and includes the superscript notation of the parentheses.

\begin{definition}\label{operations}For all $i,j\in\mathbb{N}$, an
object $x$ and assuming
the knowledge of the usual addition and its properties, we have

\begin{list}{}{}
\item i) $\oplus_0$ is the usual addition operation $+$

\item ii) $x=(1)\oplus^{[\,]}_j x$ with $[\,]$ an empty sequence.

\item iii) $(n)\oplus_{i+1}^{[S]}x\oplus^{[a]}_i
x=(n+1)\oplus^{[S,a]}_{i+1}x$ with $n,a\in\mathbb{N}$ and $[S]$ is
a sequence.

\item iv) $x\oplus^{[b]}_i(n)\oplus_{i+1}^{[S]}x
=(n+1)\oplus^{[b,S]}_{i+1}x$ with $n,b\in\mathbb{N}$ and $[S]$ is
a sequence.

\end{list}
\end{definition}

Using this notation with superscripts we revisit a previous example which can be
written as the following. Note that since addition is associative,
we can write $x+x+x$ as $x+^{[0]}x+^{[0]}x$.

\bigskip

\begin{tabular}{lll}
$x+^{[0]}x+^{[0]}x$&=&$(1)\oplus_1^{[\,]} x+^{[0]}x+^{[0]}x$\\
&=&$(1+1)\oplus_1^{[0,0]} x+x$\\
&=&$(2)\oplus_1^{[0,0]} x+x$\\
&=&$(1+2)\oplus_1^{[0,0,0]} x$\\
&=&$3\oplus_1^{[0,0,0]} x$\\
&=&$3\cdot^{[0,0,0]} x$\\
&=&$3\cdot x$\\
\end{tabular}

\bigskip

Here are two more elaborate examples:

\bigskip

\begin{tabular}{lll}
$x^{x^{x^x}}$&=&$(((x\oplus_2 x)\oplus_2 x)\oplus_2 x)$\\
&=&$x\oplus^0 x\oplus^1 x\oplus^2 x$\\
&=&$(1)\oplus^{[\,]}_3 x\oplus^0 x\oplus^1 x\oplus^2 x$\\
&=&$(2)\oplus^{[0]}_3 x\oplus^1 x\oplus^2 x$\\
&=&$(3)\oplus^{[0,1]}_3 x\oplus^2 x$\\
&=&$(4)\oplus^{[0,1,2]}_3 x$\\
\end{tabular}
\bigskip

\bigskip

\begin{tabular}{lll}
$(x^x)^{x^{x^x}}$&=&$(((x\oplus_2 x)\oplus_2 x)\oplus_2 (x\oplus_2 x))$\\
&=&$x\oplus^0_2 x\oplus^1_2 x\oplus^2_2 x\oplus^0_2 x$\\
&=&$x\oplus^0_2 x\oplus^1_2 (1)\oplus^{[\,]}_3 x\oplus^2_2 x\oplus^0_2 x$\\
&=&$x\oplus^0_2 (2)\oplus^{[1]}_3 x\oplus^2_2 x\oplus^0_2 x$\\
&=&$x\oplus^0_2 (3)\oplus^{[1,2]}_3 x\oplus^0_2 x$\\
&=&$x\oplus^0_2 (4)\oplus^{[1,2,0]}_3 x$\\
&=&$(5)\oplus^{[0,1,2,0]}_3 x$\\
\end{tabular}

\bigskip

Sometimes we will write $\rightarrow$ instead of $[0,1,2,3,...]$ which indicates that the operations are composed from the left to the right and similarly we will write $\leftarrow$ for $[...,3,2,1,0]$. For exponentiation and for hyperoperations beyond exponentiation we do not have associativity. For example, $(4)\oplus^{\rightarrow}_3 x\neq (4)\oplus^{\leftarrow}_3 x$, since
$$(4)\oplus^{\rightarrow}_3 x=(((x\oplus_2 x)\oplus_2 x)\oplus_2 x)=x^{x^{x^x}}$$
$$(4)\oplus^{\leftarrow}_3 x=(x\oplus_2 (x\oplus_2 (x\oplus_2 x)))=((x^x)^x)^x=x^{x^3}$$

Note that, in general, we have the following expressions:
$$(n)\oplus^{\rightarrow}_3 x=(((x\oplus_2 x)\oplus_2 ...)\oplus_2 x)=\underbrace{x^{x^{\, .^{\, .^{\, .^{x}}}}}}_n$$

$$(n)\oplus^{\leftarrow}_3 x=(x\oplus_2 (...\oplus_2(x\oplus_2 (x\oplus_2 x)))=x^{x^{n-1}}$$

The use of the superscript notation becomes especially important when we want to investigate properties. In particular, we will see that sequences of even and odd integers appear in a generalization of distributivity.

\subsection{Zero}

We now look at the object zero with this extended
notation for $x\in\mathbb{N}$. For addition, we know that $0+x=x$
and $x+0=x$. For multiplication we have that $0\cdot x= 0$ and
$x\cdot 0=0$. For exponentiation, we lose the property commutativity since
$x\oplus_2 0=0^x=0$ and $0\oplus x=x^0=1$ for $x\neq 0$.

For the operation $\oplus_3$ it becomes a little more subtle. If
the superscript arrow is as defined in the previous section, then we have $4\oplus_3^{\rightarrow}x=x^{x^{x^x}}$. We can
conceptualize that $0\oplus_3^{\rightarrow}x=0$ for all $x$, but
what can we say about $x\oplus_3^{\rightarrow}0$?

In some contexts, such as Calculus, $0^0$ is taken to be an indeterminate, but in
other contexts it is taken to be equal to $1$. If we assume that
$0^0=1$ then we find the following equations.

$$1\oplus_3^{\rightarrow}0=0=0$$
$$2\oplus_3^{\rightarrow}0=0^{0}=1$$
$$3\oplus_3^{\rightarrow}0=0^{0^0}=0$$
$$4\oplus_3^{\rightarrow}0=0^{0^{0^0}}=1$$
$$...$$

Which can be summarized in the following proposition.

\begin{proposition}Let $n\in\mathbb{N}$, if we say that $0^0=1$, then
$$n\oplus_3^{\rightarrow}0=\begin{cases}0 \mbox{ if n is odd} \\
1 \mbox{ if n is even}\end{cases}$$\end{proposition}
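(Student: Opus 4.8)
The plan is to reduce the proposition to a one-step recursion and then run an induction on the parity of $n$. Writing $a_n := n\oplus_3^{\rightarrow}0$ for brevity, I would first extract from the formal definition of hyperoperations the recurrence $a_{n+1}=0^{a_n}$ together with the base value $a_1=0$. This recurrence does all the work: once it is in hand, the alternating pattern $0,1,0,1,\dots$ is immediate and the parity dichotomy follows at once.

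To obtain the recurrence, I would specialize clause (iii) of the hyperoperation definition with $i=2$, so that $(n)\oplus_3^{[S]}x\oplus_2^{[a]}x=(n+1)\oplus_3^{[S,a]}x$, and then read it under the $\rightarrow$ convention, which appends the next index on the right. This gives $(n+1)\oplus_3^{\rightarrow}x=\bigl((n)\oplus_3^{\rightarrow}x\bigr)\oplus_2 x$. Since $\oplus_2$ is exponentiation in the suffix convention, namely $m\oplus_2 x=x^m$, the right-hand side equals $x^{(n)\oplus_3^{\rightarrow}x}$, so that $a_{n+1}=x^{a_n}$ evaluated at $x=0$, i.e. $a_{n+1}=0^{a_n}$. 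The base case $a_1=0$ comes from clause (ii), which gives $(1)\oplus_3^{[\,]}x=x$ and hence $a_1=0$. This is exactly the intended reading of $(n)\oplus_3^{\rightarrow}0$ as the height-$n$ tower of zeros evaluated from the top down.

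With the recurrence established, the induction is short. The base cases are $a_1=0$ (odd, value $0$) and $a_2=0^{a_1}=0^0=1$ (even, value $1$), where I invoke the standing hypothesis $0^0=1$. For the inductive step, if $a_n=0$ with $n$ odd then $a_{n+1}=0^0=1$ and $n+1$ is even; if $a_n=1$ with $n$ even then $a_{n+1}=0^1=0$ and $n+1$ is odd. In either case the value and the parity flip together, which closes the induction and yields the stated formula.

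The only real obstacle is bookkeeping rather than mathematics: I must make sure the $\rightarrow$ ordering is applied consistently so that the recurrence places the new zero as the \emph{base} and the previously computed tower as the \emph{exponent}, giving $0^{a_n}$ rather than $a_n^{\,0}$, since the two disagree precisely because exponentiation is not commutative (the same non-commutativity noted earlier for $0\oplus_2 x$ versus $x\oplus_2 0$). I would also state explicitly that the whole proposition is conditional on the convention $0^0=1$: without it, $a_2$ and every later even term is undefined, and the clean dichotomy collapses.
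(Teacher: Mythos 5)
Your proposal is correct and takes essentially the same route as the paper: the paper's entire proof is the one line ``By induction on even integers and by induction on odd integers,'' i.e., an induction on $n$ driven by exactly the recurrence $a_{n+1}=0^{a_n}$, $a_1=0$ that you derive. Your version merely merges the paper's two parity-separated inductions (stepping by $2$) into a single induction stepping by $1$ with a parity-coupled invariant, and supplies the definitional bookkeeping---the recurrence, the base case, and the role of the convention $0^0=1$---that the paper leaves implicit.
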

\begin{proof}By induction on even integers and by induction on odd
integers.\end{proof}

\subsection{Higher Distributivity}

In arithmetic, distributivity is one of the fundamental identities
regarding the interaction between two operations. In this section we
will study identities that combine two operations such that there
is a difference of $1$ between their subscripts and that the leftmost
operation has a greater subscript than the other operation. In a few words,
distributivity informs us that many occurrences of a term can be
grouped together. For example, $n(x+y)=nx+ny$ tells us that we can
group the $x$ and $y$ terms together. The following proposition
generalizes this up to a pair of operations $\oplus_3$ and
$\oplus_2$. Note that $\mathbb{N}^+$ will denote the non-negative integers.

\begin{theorem}For all $i\in{1,2,3}$, we have
$$(n)\oplus_i^{(1,2,...,n-1)}(x\oplus_{i-1}y)
=((n)\oplus_{i}^{Even}x\oplus_{i-1}(n)\oplus^{Odd}_{i}y)$$
with $n,x,y\in\mathbb{N}$ and for $$Even=[2n-2, 2n-4,...,4,2]$$ and $$Odd=[1,3,...,2n-5,2n-3].$$\end{theorem}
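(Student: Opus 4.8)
The plan is to fix $i$ and induct on $n$, peeling off one copy of the compound base $(x\oplus_{i-1}y)$ at each stage and re-collecting terms by means of the recursive identities (iii) and (iv) that define the hyperoperations. Before doing any real work I would dispose of the two commutative cases. For $i=1$ the assertion is $n(x+y)=nx+ny$ and for $i=2$ it is $(xy)^n=x^ny^n$; since $\oplus_0$ and $\oplus_1$ are both associative and commutative, every admissible superscript sequence gives the same value, so the particular sequences $Even$ and $Odd$ are immaterial and each case collapses to the classical distributive law, provable by a one-line induction. This isolates $i=3$, where $\oplus_2$ is neither associative nor commutative, as the only case with genuine content and the only case in which the precise form of the superscripts can matter.

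For $i=3$ I would argue by induction on $n$. The base case $n=1$ is immediate: the sequences $Even$ and $Odd$ are empty and $(1)\oplus_3 z=z$, so both sides reduce to $x\oplus_2 y$. For the inductive step I would first use identity (iii) to write $(n{+}1)\oplus_3^{\rightarrow}(x\oplus_2 y)$ as $\bigl((n)\oplus_3^{\rightarrow}(x\oplus_2 y)\bigr)\oplus_2(x\oplus_2 y)$, substitute the inductive hypothesis into the first factor so that it becomes $\bigl((n)\oplus_3^{Even}x\bigr)\oplus_2\bigl((n)\oplus_3^{Odd}y\bigr)$, and then attempt to reabsorb the trailing copies of $x$ and $y$ into the two towers using identity (iv). The goal of the step is to recognise the result as $\bigl((n{+}1)\oplus_3^{Even}x\bigr)\oplus_2\bigl((n{+}1)\oplus_3^{Odd}y\bigr)$ for the length-$n$ and length-$(n{+}1)$ instances of the sequences.

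The hard part, and the part I expect to carry the whole proof, is the superscript bookkeeping. I would isolate it as a separate regrouping lemma stating that inserting one fresh $(x\oplus_2 y)$ pair into a collected $\oplus_2$-formula raises the superscript of every pre-existing operation by exactly two levels — one for the new $x$-operation and one for the new $y$-operation — while introducing the two new operations at the extreme positions dictated by the $\max+1$ rule of the hyperoperation-formula calculus. Iterating this lemma is what should turn the arithmetic progressions $[2n{-}2,2n{-}4,\dots,2]$ and $[1,3,\dots,2n{-}3]$ of common difference $2$ into the corresponding progressions for $n+1$, the even-indexed superscripts tracking the operations attached to the $x$-copies and the odd-indexed ones those attached to the $y$-copies. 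The delicate point is to justify the precise value of the shift and the placement of the two new operations against the left-associated expansion of $(x\oplus_2 y)$; this is where the non-associativity of $\oplus_2$ is felt, and where I would check the claimed $Even$ and $Odd$ patterns most carefully, since everything else in the argument is a routine application of (iii) and (iv).
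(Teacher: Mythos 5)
Your frame---settle $i=1,2$ by the classical distributive laws, then induct on $n$ for $i=3$ by peeling off one trailing $(x\oplus_2 y)$ via rule (iii)---is the same as the paper's, but the step you delegate to ``identity (iv)'' is exactly the step that rules (iii) and (iv) cannot perform, and it is where the paper invokes the one genuinely semantic fact in the whole proof. After you substitute the inductive hypothesis you are holding the formula $\bigl[(n)\oplus_{3}^{Even}x\oplus_2^{[0]}(n)\oplus_3^{Odd}y\bigr]\oplus_2(x\oplus_2 y)$, which in ordinary notation is $(y^x)^{T_n}$ with $T_n=y^{xy^{\cdot^{\cdot^{y^x}}}}$: the fresh $x$ and fresh $y$ sit \emph{inside}, as the base of the outer exponentiation, attached to each other by what is effectively a superscript-$0$ operation. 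The target form for $n+1$ needs the fresh $x$-operation and $y$-operation to be the two \emph{outermost} (last-performed) operations, carrying the new maximal superscripts $2n$ and $2n-1$. Rules (iii) and (iv) are purely syntactic collection rules---they only merge a bare $x$ adjacent to an already-collected tower of $x$'s---and no sequence of such collections can carry the new $x$ across the expression tree from the base to the outside. What is needed is the law of exponents $(y^x)^a=(y^a)^x$, in hyperoperation notation $a\oplus_2(x\oplus_2 y)=x\oplus_2(a\oplus_2 y)$. The paper applies it with $a=(k)\oplus_3^{[1,\dots,k-1]}(x\oplus_{2}y)$, i.e.\ \emph{before} substituting the inductive hypothesis, to pull the new $x$ to the outermost position and tuck the new $y$ one level in; only after that is the superscript bookkeeping a routine application of the definitions. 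Your proposal never names this identity, and in fact asserts that ``everything else in the argument is a routine application of (iii) and (iv),'' which inverts where the difficulty actually lies.

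A secondary problem is your regrouping lemma itself: as stated (``inserting one fresh pair raises the superscript of every pre-existing operation by exactly two'') it does not describe the decomposition your induction produces. In the workable version of the step the pre-existing superscripts are left untouched: $Even'=[2k-2,\dots,4,2]$ becomes $Even=[2k,2k-2,\dots,4,2]$ and $Odd'=[1,3,\dots,2k-3]$ becomes $Odd=[1,3,\dots,2k-3,2k-1]$; the two new operations simply receive the two new maximal labels $2k-1$ and $2k$, consistently with the $\max+1$ convention. (There is a consistent ``shift everything by two'' reading in which the new pair is inserted at the innermost position, but that decomposition is even further out of reach of your tools, since the trailing factor produced by rule (iii) is not at the innermost position.) Once the exponent-commutation identity is added, your induction closes and essentially coincides with the paper's proof; without it, the reabsorption step fails.
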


Before giving the proof, we will verify the theorem for a specific
example by making a free use of the notation. We have $$(3)\oplus^{[0,1]}_3(x\oplus_2y)=(3)\oplus_3^{[4,2]}x\oplus_2^{[0]}(3)\oplus_3^{[1,3]}y$$

\noindent Since
\bigskip

\begin{tabular}{lll}
$(3)\oplus^{[0,1]}_3(x\oplus_2y)$&=&$3\oplus^{[0,1]}_3(y^x)$\\
&=&$y^x\oplus_2^{[0]} y^x\oplus_2^{[1]} y^x$\\
&=&$(y^x\oplus_2 y^x)\oplus_2 y^x$\\
&=&$((y^x)^{y^x})\oplus_2 y^x$\\
&=&$(y^{xy^x})\oplus_2 y^x$\\
&=&$(y^x)^{y^{xy^x}}$\\
&=&$y^{xy^{xy^x}}$\\
\end{tabular}

\bigskip

\begin{tabular}{lll}
$(3)\oplus_3^{[4,2]}x\oplus_2^{[0]}(3)\oplus_3^{[1,3]}y$&=&$x\oplus_2^{(4)}
x\oplus^{[2]}_2 x\oplus_2^{[0]}{(3)}\oplus_3^{[1,3]}y$\\
&=&$x\oplus_2^{[4]} x\oplus^{[2]}_2
x\oplus_2^{[0]}y\oplus_2^{[1]}y\oplus_2^{[3]}y$\\
&=&$x\oplus_2 ((x\oplus_2 ((x\oplus_2y)\oplus_2y))\oplus_2y)$\\
&=&$x\oplus_2 ((x\oplus_2 ((y^x)\oplus_2y))\oplus_2y)$\\
&=&$x\oplus_2 ((x\oplus_2 (y^{(y^x)}))\oplus_2y)$\\
&=&$x\oplus_2 ((y^{x(y^x)})\oplus_2y)$\\
&=&$x\oplus_2 (y^{(y^{x(y^x)})})$\\
&=&$y^{x(y^{x(y^x)})}$\\
&=&$y^{xy^{xy^x}}$\\
\end{tabular}

\bigskip

\begin{proof}
Since we know that $n(x+y)=nx+ny$ and because addition is
associative, the identity is also true for $i=1$

By a well known property of exponents, we have $(xy)^n=x^ny^n$. This can be written as $$n\oplus_2 (x\oplus_1 y)=(n\oplus_2 x) \oplus_1 (n\oplus_2 y).$$ Since multiplication is
associative, we find that the statement of the theorem is also true for $i=2$.

We will prove by induction on $n$ that the distributivity identity
is true for $i=3$. We verify that it is true for $n=0$ and $n=1$. Suppose that it is true
for $n=k$ then we will show that it is true for $n=k+1$.

Let $A=(k+1)\oplus_3^{[1,2,...,k]}(x\oplus_{2}y)$. By definition \ref{operations} we can write

$$A=(k)\oplus_3^{[1,2,...,k-1]}(x\oplus_{2}y)\oplus_2^k (x\oplus_2y)$$
\noindent By definition of the superscript, we can write the superscript $k$ on the right side of the previous equation as a parenthesis.Thus we have $$A=((k)\oplus_3^{[1,2,...,k-1]}(x\oplus_{2}y)\oplus_2 (x\oplus_2y)).$$

Another well known property of exponents is $(y^{x})^a=(y^a)^x$.
In the hyperoperation notation we have the identity
$$a\oplus_2(x\oplus_2 y)=x\oplus_2(a\oplus_2 y)$$

Using this property of exponents for
$a=(k)\oplus_3^{[1,2,...,k-1]}(x\oplus_{2}y)$ we find

$$A=((k)\oplus_3^{[1,2,...,k-1]}(x\oplus_{2}y)\oplus_2(x\oplus_2y))
=(x\oplus_2((k)\oplus_3^{[1,2,...,k-1]}(x\oplus_{2}y)\oplus_2y))$$

Since we assume that the identity is true for $n=k$ we have that $$(k)\oplus_3^{[1,2,...,k-1]}(x\oplus_{2}y)=(k)\oplus_{3}^{Even'}x\oplus^{[0]}_{2}(k)\oplus^{Odd'}_{3}y$$
Where  $$Even'=[2k-2, 2k-4,...,4,2]$$ and $$Odd'=[1,3,...,2k-5,2k-3].$$
Hence, we can write

$$A=(x\oplus_2((k)\oplus_3^{[1,2,...,k-1]}(x\oplus_{2}y)\oplus_2y))
=(x\oplus_2((k)\oplus_{3}^{Even'}x\oplus^{[0]}_{2}(k)\oplus^{Odd'}_{3}y\oplus_2y))$$
By definition \ref{operations}, we can write $(k)\oplus_{3}^{Even'}x$ and $(k)\oplus^{Odd'}_{3}y$ in terms of the operation $\oplus_2$ only. Since the maximum integer of the $Even'$ and $Odd'$ sequences is $2k-2$, we find, by using definition \ref{operations} again, that

$$A=(x\oplus_2((k)\oplus_{3}^{Even'}x\oplus^{[0]}_{2}(k)\oplus^{Odd'}_{3}y\oplus_2y))
=x\oplus^{[2n]}_2(k)\oplus_{3}^{Even}x\oplus^{[0]}_{2}(k)\oplus^{Odd}_{3}y\oplus^{[2n-1]}_2y$$
Thus, we find
$$A=((k+1)\oplus_{3}^{Even}x\oplus_{2}(k+1)\oplus^{Odd}_{3}y),$$
for  $$Even=[2k,2k-2, 2k-4,...,4,2]$$ and $$Odd=[1,3,...,2k-5,2k-3,2k-1].$$
Hence, by induction, the distributive identity is true for
$i=3$.

\end{proof}

Note that the above theorem is true for
$n\in\mathbb{N}$. Would it still be true if we replace $\mathbb{N}$
by the real or complex numbers? Also, other relations could be considered. In particular, relations where the difference between the subscripts is $k\in\mathbb{N}$.

\subsection{Higher Fundamental Theorem of Arithmetic}

The fundamental theorem of arithmetic tells us that there is a unique way to write
a positive integer $n>1$ in the form

$$p_{a_1}^{b_1}\cdot p_{a_2}^{b_2}\cdot p_{a_3}^{b_3}\cdot ...\cdot
p_{a_h}^{b_h}$$

\noindent such that $p_{a1},p_{a2},...,p_{am}$ are
primes, $p_{a1}<p_{a2}<...<p_{am}$ and
$b_1,b_2,...,b_m$ are positive integers.

We remark that the formulation of the fundamental theorem of arithmetic depends on the operations of multiplication and exponentiation. In the following, we will see that a similar formulation exists for the operation of exponentiation and the operation $\oplus_3$.

The definition of primes relies on the binary operation of multiplication. Let's define the concept of primes for exponentiation.

\begin{definition}We will say that $q$ is an
exponential prime or a $\oplus_2$-prime if $q$ cannot be written as $u^v$ for some positive integer $u,v$ such that $u>1$ and $v>1$.\end{definition}

\begin{lemma}\label{lemmaunique}$q$ is an exponential prime if and only if the exponents $b_1,b_2,...,b_m$
of the unique prime factorization of $q\neq 1$ satisfy $\gcd
(b_1,b_2,...,b_m)=1$.\end{lemma}
\begin{proof}($\Rightarrow$) Suppose $\gcd (b_1,b_2,...,b_m)=k\neq 1$
then we can write $q=c^k$ for some positive integer $c$, hence a contradiction with the fact that $q$ is a $\oplus_2$-prime.

($\Leftarrow$)Suppose that $q\neq 1$ is not an exponential prime. Hence,
$q=u^v=p^{b_1}_{a_1}p^{b_2}_{a_2}...p^{b_m}_{a_m}$ for $v>1$ ,
since each $p_i$ are primes, we have that $p_i\mid u$, thus
$(p^{e_1}_{a_1} p^{e_2}_{a_2}...p^{e_t}_{a_t})^v=p^{b_1}_{a_1}
p^{b_2}_{a_2}...p^{b_t}_{a_t}$. By the Fundamental Theorem of
Arithmetic, for all $k$ we must have
$ve_k=b_k$, which contradicts the fact that $\gcd (b_1,b_2,...,b_m)=1$.
\end{proof}

We will generalize the fundamental theorem of arithmetic in the following theorem. It would be interesting to go beyond this and find a statement which could be seen as the `Fundamental Theorem of Hyperarithmetic'.  Note that the exponential primes is a synonym for the $\oplus_2$-primes and the usual primes (primes associated to multiplication) is a synonym for the $\oplus_1$-primes.

Due to the fact that ${b_1}\oplus_i^{\leftarrow}q_{a_1}=q_{a_1}^{q_{a_1}^{b_1-1}}$ becomes an immense number for small integers input, almost all examples are out of reach. Note that in the case where $b_1=1$, we find that $q_{a_1}^{q_{a_1}^{b_1-1}}=q_{a_1}$.

\begin{theorem}\label{uniquefactorization}For $i\in\{2,3\}$ and $b_j>0$, there is a unique way to write a
positive integer in the form $$[[[({b_m}\oplus_i^{\leftarrow}q_{a_m})\oplus_{i-1}...]\oplus_{i-1}
({b_3}\oplus_i^{\leftarrow}q_{a_3})]\oplus_{i-1}
({b_2}\oplus_i^{\leftarrow}q_{a_2})]\oplus_{i-1}
({b_1}\oplus_i^{\leftarrow}q_{a_1}),$$
where each $q_{a_k}$ is a $\oplus_i$-prime and $q_{a_k}$ does not divide $$[({b_m}\oplus_i^{\leftarrow}q_{a_m})\oplus_{i-1}...]\oplus_{i-1}
({b_{k+1}}\oplus_i^{\leftarrow}q_{a_{k+1}})$$ for all $k$.
\end{theorem}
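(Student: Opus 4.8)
The plan is to treat the two cases $i=2$ and $i=3$ in sequence, establishing the exponential case $i=2$ directly from the ordinary Fundamental Theorem of Arithmetic together with Lemma \ref{lemmaunique}, and then reducing $i=3$ to $i=2$ by exploiting the shift between consecutive hyperoperations. First I would unwind the notation. Since $A\oplus_2 B=B^A$, for $i=2$ every term $b_k\oplus_2^{\leftarrow}q_{a_k}$ equals $q_{a_k}^{b_k}$, and because $\oplus_1$ is associative multiplication the displayed expression is simply the product $q_{a_m}^{b_m}\cdots q_{a_2}^{b_2}q_{a_1}^{b_1}$, with the side condition reading $q_{a_k}\nmid\prod_{j>k}q_{a_j}^{b_j}$. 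The same unwinding shows that for $i=3$ the displayed expression is the power tower $T_1^{T_2^{\cdots^{T_m}}}$ with $T_k=q_{a_k}^{\,q_{a_k}^{\,b_k-1}}$, so the $i=3$ statement is a canonical-tower analogue of the $i=2$ statement and the two can be handled by parallel inductions on the integer $N$.

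Existence is the easier half. For $i=2$, Lemma \ref{lemmaunique} already lets me write $N$ in an admissible way by peeling off one exponential-prime power factor: using ordinary prime factorization I extract a term $q_{a_1}^{b_1}$, write $N=q_{a_1}^{b_1}\cdot T$ with $q_{a_1}\nmid T$, and apply the induction hypothesis to the smaller integer $T$. For $i=3$ I would descend through the tower instead: write $N=T_1^{E}$, identify the base $T_1=q_{a_1}^{\,q_{a_1}^{\,b_1-1}}$ with $q_{a_1}$ a $\oplus_3$-prime, and recurse on the exponent $E$, invoking the $i=2$ machinery together with Lemma \ref{lemmaunique} to control the passage between a number and its iterated logarithm.

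Uniqueness is where the non-divisibility hypotheses do the real work, and I would again argue by induction on $N$. For $i=2$, given two admissible representations I would show that the innermost datum $(q_{a_1},b_1)$ is forced: the clause $q_{a_1}\nmid\prod_{j>1}q_{a_j}^{b_j}$ together with the Fundamental Theorem of Arithmetic pins down $q_{a_1}^{b_1}$ as a distinguished factor of $N$, after which the two tails agree by the induction hypothesis. For $i=3$ the analogous step identifies the base $T_1$ of the tower uniquely and then applies the induction hypothesis to the exponent $E$, the non-divisibility condition again serving to separate the lowest level of the tower from the rest.

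The main obstacle I anticipate is uniqueness in the presence of \emph{composite} exponential primes such as $6$, where the factor $q_{a_k}^{b_k}$ need not be an ordinary prime power, so the clause $q_{a_k}\nmid(\text{tail})$ must be leveraged very carefully to rule out alternative groupings; the delicate point is to make the canonical extraction genuinely well defined, so that a coarse representation and a finer one cannot both satisfy the hypotheses for the same $N$. A secondary obstacle, specific to $i=3$, is that $\oplus_2$ is non-associative, so the reduction to $i=2$ must track the bracketing encoded by the superscripts and verify that taking logarithms interacts correctly with the tower at each level of the descent.
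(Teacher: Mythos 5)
Your skeleton — unwind the notation, peel off the bottom of the tower, recurse on the exponent, let the non-divisibility clause separate the levels — coincides with the paper's, but at the two points where the actual mathematics happens your proposal only restates the goal. For existence when $i=3$, ``write $N=T_1^{E}$, identify the base'' is precisely the step that requires a construction, and the paper supplies one: write $N=\prod_j p_{c_j}^{d_j}$, put $g=\gcd(d_1,\dots,d_t)$ and $q_{a_1}=\prod_j p_{c_j}^{d_j/g}$, which is an exponential prime by Lemma \ref{lemmaunique}; then take $b_1$ maximal with $g=q_{a_1}^{b_1}n_1$, so that $q_{a_1}\nmid n_1$ and $N=\bigl(q_{a_1}^{q_{a_1}^{b_1}}\bigr)^{n_1}$, and recurse on $n_1<N$. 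Nothing in your outline produces the base or certifies its primality. For uniqueness when $i=3$, ``the analogous step identifies the base of the tower uniquely'' is an assertion, not an argument; the paper's mechanism is that two representations give $q_{a_1}^{r}=q_{c_1}^{s}$ with $r=q_{a_1}^{b_1-1}u$, $s=q_{c_1}^{d_1-1}v$, one reduces to coprime parts $r=kr'$, $s=ks'$, and comparing ordinary prime factorizations shows $r'>1$ would force $r'$ to divide every exponent of $q_{c_1}$, contradicting Lemma \ref{lemmaunique}; hence $r'=s'=1$ and $q_{a_1}=q_{c_1}$, after which $b_1\neq d_1$ yields $q_{a_1}^{z}u=v$ with $z>0$, so $q_{a_1}\mid v$, contradicting exactly the non-divisibility hypothesis. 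These two arguments are essentially the whole proof, and both are absent.

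There is also a point where your reading, taken literally, would sink the proof rather than merely leave a gap. You take the bases to be $\oplus_i$-primes, as the statement says ($\oplus_2$-primes for $i=2$, $\oplus_3$-primes for $i=3$), but under that reading uniqueness is simply false: for $i=2$, the integer $12$ has the admissible representations $12^1$, $2^2\cdot 3^1$ and $2^1\cdot 6^1$ (all bases are exponential primes, and in each case no base divides the product of the terms to its left); for $i=3$, the integer $8$ is a $\oplus_3$-prime (it is not a tower) though not an exponential prime, so $64=8^{2}=\bigl(2^{2}\bigr)^{3}$ gives two admissible tower representations. The paper's proof silently uses $\oplus_{i-1}$-primes as bases — ordinary primes for $i=2$ (whence the appeal to the fundamental theorem of arithmetic) and exponential primes for $i=3$ — and that restriction is exactly what makes the gcd construction and the coprime-part argument work. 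So the obstacle you flagged about composite exponential primes such as $6$ is not something to be ``leveraged very carefully''; it is a counterexample to the reading you adopted, and the resolution is to change which primes may serve as bases, not to refine the extraction. (Even with the corrected reading, uniqueness for $i=2$ holds only up to the reorderings allowed by the non-divisibility clause, a point the paper itself glosses over.)
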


Before giving the proof, to help intuition, we can write the factorization in the statement of the theorem for $i=3$ as
$$\underbrace{(q_{a_1}^{b_1-1})^{(q_{a_2}^{b_2-1})^{\,\,\, .^{\,\,\, .^{\,\,\, .^{\,\,\,(q_{a_m}^{b_m-1})}}}}}}_m$$

\begin{proof}

For $i=2$ we have
$$[[[({b_m}\oplus_2^{\leftarrow}q_{a_m})\oplus_{1}...]\oplus_{1}
({b_3}\oplus_2^{\leftarrow}q_{a_3})]\oplus_{1}
({b_2}\oplus_2^{\leftarrow}q_{a_2})]\oplus_{1}
({b_1}\oplus_2^{\leftarrow}q_{a_1}).$$
Since $\oplus_2$ is the usual exponentiation and $\oplus_1$ is the
usual multiplication we have and because multiplication is associative, the superscript `$\leftarrow$' and the box brackets can be ignored. Thus
$$(q_{a_m}^{b_m})\cdot...\cdot
(q_{a_3}^{b_3})\cdot
(q_{a_2}^{b_2})\cdot
(q_{a_1}^{b_1}),$$
where
 $(q_{a_m}^{b_m})\cdot ...\cdot (q_{a_{k+2}}^{b_{k+2}})\cdot (q_{a_{k+1}}^{b_{k+1}})$ cannot be divided by $q_k$. Hence, the theorem is true for $i=2$ by the fundamental theorem of arithmetic.

For $i=3$, we need to show that every positive integer $n$ can
written in the form seen in the statement of the theorem and we need to show that the representation in this form is unique.

We will show the first part by constructing for each $n$ such a
representation. By the fundamental theorem of arithmetic we have
$$n=p^{d_1}_{c_1} p^{d_2}_{c_2}...p^{d_t}_{c_t}$$
Let $\gcd (d_1,d_2,...,d_t)=g$ and
$gd_1'=d_1,gd_2'=d_2,...,gd_t'=d_t$, thus we have
$$n=(p^{d'_1}_{c_1} p^{d'_2}_{c_2}...p^{d'_t}_{c_t})^g.$$

Let $q_{a_1}=p^{d_1'}_{c_1} p^{d_2'}_{c_2}...p^{d_t'}_{c_t}$, since
$\gcd (d_1',d_2',...,d_t')=1$, we have by the previous lemma that
$q_{a_1}$ is an exponential prime.

Recall that $x\oplus_3^{\leftarrow}y$ can be written in usual notation
as $y^{y^{x-1}}$. Take the maximal integer $b_1\geq 0$ such that
$g=q_{a_1}^{b_1}n_1$ for some positive integer $n_1$. We now have
$$n=q_{a_1}^{(q_{a_1}^{b_1}n_1)}=(q_{a_1}^{q_{a_1}^{b_1}})^{n_1},$$
where $q_{a_1}\nmid n_1$ and we also have that  $n_1<n$. We repeat all the
previous steps starting with $n_1$ instead of $n$. This whole process will eventually stop since $0<...<n_2<n_1<n$.

We now show the uniqueness of representation. Suppose distinct representations such that
$(q_{a_1}^{q_{a_1}^{b_1-1}})^u=(q^{q_{c_1}^{d_1-1}}_{c_1})^v$ where $q_{a_1}$ and $q_{c_1}$ are exponential
primes and $$u=[[({b_m}\oplus_2^{\leftarrow}q_{a_m})\oplus_{1}...]\oplus_{1}({b_3}\oplus_2^{\leftarrow}q_{a_3})]\oplus_{1}({b_2}\oplus_2^{\leftarrow}q_{a_2})$$
and $$v=[[({d_{m'}}\oplus_2^{\leftarrow}p_{c_m})\oplus_{1}...]\oplus_{1}({d_3}\oplus_2^{\leftarrow}p_{c_3})]\oplus_{1}({d_2}\oplus_2^{\leftarrow}p_{c_2}).$$
Let $r=q_{a_1}^{b_1-1} u$ and $s=q_{c_1}^{d_1-1} v$ so that $q_{a_1}^r=q_{c_1}^s$.

Suppose $r\neq s$. Let $r=kr'$ and $s=ks'$ such that $\gcd (r',s')=1$ for $k$ a positive integer and $r,s$ are not both equal to $1$, thus we have $q_{a_1}^{r'}=q^{s'}_{c_1}$. Let $q_{a_1}=p_{e_1}^{f_1}p_{e_2}^{f_2}...p_{e_n}^{f_n}$ and
$q_{c_1}=p_{e_1}^{h_1}p_{e_2}^{h_2}...p_{e_n}^{h_n}$, this is because the $p$'s are primes which implies that $q_{a_1}$ and $q_{c_1}$ must have the same factors.
By the fundamental theorem of arithmetic we also must have that
$f_i r'=h_i s'$ for all $i$. Assuming that $r'\neq 1$ we can write this differently as $f_i=\frac{h_i s'}{r'}$.  Since $\gcd (r',s')=1$, we have that $r'\mid h_i$ for all i. This indicates that $\gcd (h_1,h_2,...,h_n)\neq 1$, a contradiction, by lemma \ref{lemmaunique}, with the assumption that $q_{c_1}$ is an exponential prime. Similarly, if $s'\neq 1$, we find a contradiction. Therefore we must have that $r'=s'=1$, which means that $r=k=s$ and we must conclude that $q_{a_1}=q_{c_1}$.

We now have
$$(q_{a_1}^{q_{a_1}^{b_1-1}})^u=(q^{q_{a_1}^{d_1-1}}_{a_1})^v$$ If $b_1=d_1$ we
are finished since we repeat the whole process starting with $u,v$ and this process must stop
since the exponents gets smaller at each step.

Applying the logarithm on each side of the previous equation
we find
$$q_{a_1}^{(b_1-1)u}=q_{a_1}^{(d_1-1)v}.$$
Without loss of generality, assume $b_1-1>d_1-1$. Define $z$ such that  $z=(b_1-1)-(d_1-1)>0$.
Hence
$$q_{a_1}^{(b_1-1)-(d_1-1)}u=v.$$
$$q_{a_1}^{z}u=v.$$
Thus $q_{a_1}$ must divide $v$, a contradiction with the assumption of the statement of the theorem. Therefore, we can conclude that we have uniqueness of representation.

\end{proof}

In the following section we will formally define what is a prime
associated to an operation and find that the only prime associated to addition is $1$. We could also include the case $i=1$ in theorem \ref{uniquefactorization}, by modifying the theorem in the following manner:

\begin{theorem}For $i\in\{1,2,3\}$ and $b_j>0$, there is a unique way to write a
positive integer in the form $$[[[({b_m}\oplus_i^{\leftarrow}q_{a_m})\oplus_{i-1}...]\oplus_{i-1}
({b_3}\oplus_i^{\leftarrow}q_{a_3})]\oplus_{i-1}
({b_2}\oplus_i^{\leftarrow}q_{a_2})]\oplus_{i-1}
({b_1}\oplus_i^{\leftarrow}q_{a_1}),$$
where each $q_{a_k}$ is a $\oplus_i$-prime and such that if $$[({b_m}\oplus_i^{\leftarrow}q_{a_m})\oplus_{i-1}...]\oplus_{i-1}
({b_{k+1}}\oplus_i^{\leftarrow}q_{a_{k+1}})\neq 0$$ then $q_{a_k}$ does not divide it.
\end{theorem}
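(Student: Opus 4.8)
The plan is to extend the already-proven Theorem~\ref{uniquefactorization} to the case $i=1$ by reinterpreting its statement in terms of familiar multiplication and addition. First I would unwind the notation for $i=1$: since $\oplus_1$ is ordinary multiplication and $\oplus_0$ is ordinary addition, the expression ${b_k}\oplus_1^{\leftarrow}q_{a_k}$ should reduce to $b_k\cdot q_{a_k}$ (the superscript arrow being irrelevant because multiplication is associative and commutative), and the outer $\oplus_{i-1}=\oplus_0$ operations are just a sum. So the whole factorization for $i=1$ collapses to a finite sum $\sum_{k} b_k\, q_{a_k}$ where each $q_{a_k}$ is a $\oplus_1$-prime. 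I would then record what a $\oplus_1$-prime is under the forthcoming general definition: a number that cannot be written as a product $u\cdot v$ with $u,v>1$, which forces every $\oplus_1$-prime to equal $1$. This is the crucial collapse, and it explains the modified divisibility clause: the condition ``$q_{a_k}$ does not divide the partial sum unless that partial sum is $0$'' is what survives when the primes are all $1$.

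With that reduction in hand, the substance of the claim for $i=1$ becomes a statement about representing a positive integer as a sum with a greedy/uniqueness constraint. I would argue existence by induction on the integer $n$, peeling off one summand at a time exactly as the $i=3$ case does: choose the first prime $q_{a_1}$, take the maximal coefficient $b_1$ compatible with the non-divisibility constraint, set $n_1 = n - b_1 q_{a_1}$, and observe $0\le n_1 < n$ so the recursion terminates. For uniqueness I would mimic the structure of the $i=3$ proof, comparing two putative representations term by term; the non-divisibility side condition is precisely what pins down each coefficient and forces the two representations to agree. The degenerate case where a partial sum equals $0$ must be handled separately, which is exactly why the theorem's hypothesis is phrased as ``if the partial sum $\neq 0$ then $q_{a_k}$ does not divide it'' rather than an unconditional non-divisibility.

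The cleanest route, and the one I would actually take, is not to redo the induction from scratch but to check that every step of the existing proof of Theorem~\ref{uniquefactorization} goes through verbatim for $i=1$ once one grants the two facts above (that $\oplus_1^{\leftarrow}$ is multiplication and $\oplus_0$ is addition), with the single amendment being the extra ``$\neq 0$'' proviso in the divisibility clause. Concretely, I would write: the existence construction and the uniqueness argument used for $i=3$ rely only on (a) associativity of the lower operation $\oplus_{i-1}$, and (b) a well-ordering that makes the peeling process terminate; both hold for $i=1$ since addition is associative and the partial sums strictly decrease. Thus the proof reduces to verifying that the lemma characterizing $\oplus_i$-primes and the factorization mechanics specialize correctly.

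The main obstacle I anticipate is the boundary behavior introduced by $0$ and by the triviality of $\oplus_1$-primes. Because the only $\oplus_1$-prime is $1$, the ``primes'' carry no multiplicative information and all the content is shifted into the coefficients $b_k$ and the ordering/non-divisibility constraint; I would need to confirm that this constraint genuinely yields a \emph{unique} decomposition rather than collapsing to something vacuous, and in particular that the added ``$\neq 0$'' clause neither over- nor under-constrains the representation. Verifying that this side condition exactly reproduces the standard base-representation-style uniqueness for sums, while remaining consistent with the $i=2,3$ cases already proven, is the delicate point that the argument must nail down.
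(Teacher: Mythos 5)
Your overall architecture is the paper's: the cases $i=2,3$ are inherited from theorem \ref{uniquefactorization} because there the partial expressions are never $0$ (so the new ``$\neq 0$'' proviso changes nothing), and the real content is $i=1$, where everything collapses to the trivial representation $n=0+n\cdot 1$. But the step by which you reach that collapse contains a genuine error. You assert that a $\oplus_1$-prime is ``a number that cannot be written as a product $u\cdot v$ with $u,v>1$, which forces every $\oplus_1$-prime to equal $1$.'' That is false: the numbers not expressible as such a product are $1$ together with all usual primes $2,3,5,7,\dots$, and the paper itself states that the $\oplus_1$-primes \emph{are} the usual primes ($1$ being a unit, hence excluded). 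The collapse to the prime set $\{1\}$ does not come from the definition of $\oplus_1$-primes at all; it comes from the convention, visible in all of the paper's case-by-case proofs despite the indexing in the theorem statements, that the factorization at level $i$ uses the primes associated to the \emph{lower} operation $\oplus_{i-1}$: usual primes for $i=2$ (the proof invokes the fundamental theorem of arithmetic), exponential primes for $i=3$ (via lemma \ref{lemmaunique}), and the primes associated to addition, namely $\{1\}$, for $i=1$. Getting this right is not pedantry: if the $q_{a_k}$ really were usual primes, the statement you are proving would be false, since e.g. $5=1\cdot 5$ and $5=1\cdot 3+1\cdot 2$ are two distinct admissible representations (in the second, $2\nmid 3$, and the remaining partial sum is $0$). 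So your argument rests on a mischaracterization, and it is only by accident that your conclusion (``the primes are all $1$'') lands on the reading under which the theorem is true.

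The second problem is that, having in effect reduced to the prime set $\{1\}$, you never carry out the final step, and the plan you sketch for it points in the wrong direction. There is no ``greedy'' induction, no term-by-term comparison mimicking the $i=3$ argument, and no ``base-representation-style'' uniqueness to verify: since every $b_j>0$ and $1$ divides every positive integer, \emph{any} representation with two or more terms has a nonzero partial sum that is divisible by $q_{a_k}=1$, violating the clause outright. Hence the only admissible representation is the single-term one, $n=0+n\cdot 1$, whose (empty) partial sum is $0$; existence and uniqueness follow in one line, which is exactly the paper's proof. Your claim that the $i=3$ proof goes through ``verbatim'' for $i=1$ is also untenable, since that proof runs on the fundamental theorem of arithmetic, gcd manipulations and logarithms, none of which applies or is needed here. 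You correctly diagnosed why the ``$\neq 0$'' proviso appears in the statement, but you deferred precisely the verification that constitutes the proof.
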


\begin{proof}
Since for $i=1$ and $i=2$ it never occurs that $[({b_m}\oplus_i^{\leftarrow}q_{a_m})\oplus_{i-1}...]\oplus_{i-1}
({b_{k+1}}\oplus_i^{\leftarrow}q_{a_{k+1}})=0$, thus the theorem is true for $i=1$ and $i=2$ by theorem \ref{uniquefactorization}.

For $i=1$, we have that every positive integer $n$ can be written uniquely in the form $n=n\oplus_1 1=n\cdot 1$ and since $1$ divides every positive integer, the only possible representation is when  $$[({b_m}\oplus_i^{\leftarrow}q_{a_m})\oplus_{i-1}...]\oplus_{i-1}
({b_{k+1}}\oplus_i^{\leftarrow}q_{a_{k+1}})= 0.$$
Thus, the unique representation is $n=0+n\oplus_1 1=0+n\cdot 1$.
\end{proof}

The form of representation given in theorem \ref{uniquefactorization} is not necessarily the only form which gives rise to a uniqueness of representation. One other possibility could be

$$({c_1}\oplus_i^{\rightarrow}q_{a_1})\oplus_{i-1}
[({c_2}\oplus_i^{\rightarrow}q_{a_2})\oplus_{i-1}[({c_3}\oplus_i^{\rightarrow}q_{a_3})\oplus_{i-1}[...
\oplus_{i-1}({a_k}\oplus_i^{\rightarrow}q_{a_k})]]],$$

\noindent where the superscript arrow has been reversed and the composition of the $\oplus_{i-1}$ is also reversed. If true, this could be seen as the `dual' of theorem \ref{uniquefactorization}.

\section{Primes, Diophantine and Zeta Connections}

We will see in this section that generalizing primes to arbitrary operations permit us to establish links between primes, diophantine equations and Dirichlet series. In this section we will limit ourselves to binary operations over an arbitrary set $\mathbb{M}$, but in section \ref{generalsection} we will consider $n$-ary functions over arbitrary sets.

\subsection{Generalized Primes and Composites}

According to Hardy and Wright \cite{Hardy}, a positive integer $p$ is said to be prime if $p>1$ and $p$ has no positive divisors except $1$ and $p$. These primes can be viewed as primes associated to the
binary operation of usual multiplication.
In the following we will extend the concept of primes by defining a
prime associated to arbitrary binary operations over the set  $\mathbb{M}$. Note that the notation  $\mathbb{M}$ is used to remind us that $\mathbb{M}$ can be chosen to be a set such as the positive integers $\mathbb{N}=\{1,2,3,...\}$ or the integers $\mathbb{Z}$.

\subsubsection{Preliminaries}
For the arbitrary binary operations we will favor an infix notation to help us keep the intuition of factorization and of primes associated to the usual multiplication. In section \ref{generalsection}, we will see how to define primes associated to arbitrary n-ary operations over arbitrary sets by using a functional notation instead.

\begin{definition}Let
$f:\mathbb{M}\times\mathbb{M}\rightarrow\mathbb{M}$ be a partial function, we
define the \emph{binary operation} $\oplus^{\mathbb{M}}_{f(x,y)}$ over $\mathbb{M}$ as
$$a\oplus^{\mathbb{M}}_{f(x,y)}b=f(a,b).$$ When $f(x,y)$ and $\mathbb{M}$ are not given or are clear by the context, we will write $\oplus_f$ instead of $\oplus^{\mathbb{M}}_{f(x,y)}$\end{definition}

Remark that in this definition of binary operation, $f$ is a partial operation so that operations such as division are considered to be binary operations. This is due to the fact that if $f$ would be restricted to functions, then by taking $\mathbb{M}$ to be the non-negative integers, we would encounter a problem with the division by $0$.

Addition and multiplication are commutative and associative operations, but not all binary operations are commutative and associative. For example:

\medskip

$\oplus^{\mathbb{N}}_{x^2 y^2}$ and  $\oplus^{\mathbb{N}}_{x^2 + y^2}$ are commutative and not associative.

\smallskip

$\oplus^{\mathbb{Z}}_{kxy}$,  $\oplus^{\mathbb{Z}}_{x+xy+y}$ and  $\oplus^{\mathbb{Z}}_{x+y+k}$ are commutative and associative.

\smallskip

$\oplus^{\mathbb{R}}_{y}$ a binary operation is not commutative but is associative, since if $a\neq b$ we have that $$a\oplus^{\mathbb{R}}_y b=b\neq a=a\oplus^{\mathbb{R}}_y b.$$

We now want to extend the concept of factors which is important for the definition of generalized primes. For associative and non-commutative operations it would be enough to define factors in the following way:

\medskip

\noindent``Let $m\in \mathbb{M}$, we say that $d$ is a $\oplus^{\mathbb{M}}_f$-factor of $m$ if and only if there are some $c_1,c_2\in \mathbb{M}$ such that $c_1\oplus^{\mathbb{M}}_f d=m$ or
$d\oplus^{\mathbb{M}}_f c_2=m$ or  $c_1\oplus^{\mathbb{M}}_f d\oplus^{\mathbb{M}}_f  c_2=m$.''

\medskip

But some operations are non-associative and non-commutative, thus we need to first define combinations and occurrences.

\begin{definition}Let
$f:\mathbb{M}\times\mathbb{M}\rightarrow\mathbb{M}$ be a function, we
define the set $C$ of all \emph{$\oplus_{f}$-combinations} and \emph{occurrences} as:
\begin{itemize}
\item[1.] If $x\in\mathbb{M}$ then $x\in C$ and we say that $x$ is an occurrence in $x$,

\item[2.] If $x,y\in C$ then $(x\oplus_{f} y)\in C$ and we say that $x$, $y$ and $(x\oplus_{f} y)$ are occurrences in  $(x\oplus_{f} y)$ and that all occurrences in $x$ and $y$ are also occurrences in $(x\oplus_{f} y)$.

\end{itemize}
\end{definition}

By rule 2 of the previous definition, we make sure that, for example, $3$ is an occurrence in the $\oplus_f$-combination $(4\oplus_f (6 \oplus_f (((12\oplus_f 3)\oplus_f 42)\oplus_f 15)))$.

\begin{definition}We say that $c\in C$ is a
$\oplus_{f}$-representation of $m\in\mathbb{M}$ if
$m=c$.\end{definition}

\begin{definition}Let $m\in \mathbb{M}$, we say that $d$ is a $\oplus^{\mathbb{M}}_f$-factor of $m$ if and only if $d$ is an occurrence in a representation of $m$.\end{definition}

Note that in general there might be many different representations for $m$ where two representations do not have the same factors.

\subsubsection{Units}

We now need to extend the concept of the unit. For the usual primes, $1$ is the unit since for all $k$ we have $1\cdot k=k=k\cdot 1$. In this case, the unit $1$ could be seen as a left-unit and as a right-unit, but we do not have to make this distinction since multiplication is commutative. In the general case, we need to make this distinction.

What makes the unit different from the primes and the composites is the fact that every integer $k$ can be written as a product of that unit and another integer $b$, in particular, in the case of the multiplication, the integer $b$ is $k$ itself. Take the operation $\oplus^{\mathbb{Z}}_{xy-3}$, thus $a\oplus^{\mathbb{Z}}_{xy-3}b=ab-3$ and we find that every integer $m$ can be written as $k=1\oplus^{\mathbb{Z}}_{xy-3}(k+3)$. For example, we have that $1=1\oplus^{\mathbb{Z}}_{xy-3}4$, $2=1\oplus^{\mathbb{Z}}_{xy-3}5$ and $3=1\oplus^{\mathbb{Z}}_{xy-3}6$. If we would define a unit $r$ to be such that $r\oplus s=s$, there are cases, such as seen above, where all integers would be considered to be composites. In particular, although $k=1\oplus^{\mathbb{Z}}_{xy-3}(k+3)$ we have that $1$ would not be considered as a unit because $1\oplus^{\mathbb{Z}}_{xy-3}s=s-3\neq s$. This tells us that it is not the most convenient definition. So we avoid this situation by defining the units as follows.

\begin{definition}We say that
\begin{itemize}
\item[1.] $u\in\mathbb{M}$ is a $\oplus^{\mathbb{M}}_{f}$-left-unit if for each $k\in \mathbb{M}$ there is some $b\in\mathbb{M}$ such that $k=u\oplus^{\mathbb{M}}_{f}b$.

\item[2.]  $u\in\mathbb{M}$ is a $\oplus^{\mathbb{M}}_{f}$-right-unit if for each $k\in \mathbb{M}$ there is some $b\in\mathbb{M}$ such that $k=b\oplus^{\mathbb{M}}_{f}u$.
\end{itemize}

\noindent Sometimes, we will say that an element of $\mathbb{M}$ is a $\oplus^{\mathbb{M}}_f$-unit if it is a $\oplus^{\mathbb{M}}_{f}$-left-unit or $\oplus^{\mathbb{M}}_{f}$-right-unit.

\end{definition}

 It is interesting to note that there are cases where the $b$ is not unique. For example, for the operation $\oplus^{\mathbb{Z}}_{x\mid y\mid}$ where $\mid y\mid$ is the absolute value of $y$, we have that $k=1\oplus^{\mathbb{Z}}_{x\mid y\mid}(-k)=1\oplus^{\mathbb{Z}}_{x\mid y\mid}k$

In the above example for the operation $\oplus^{\mathbb{Z}}_{xy-3}$, we now have that $1$ is considered to be a $\oplus^{\mathbb{Z}}_{xy-3}$-unit since $k=1\oplus^{\mathbb{Z}}_{xy-3}(k+3)$. Here, for each $k$ the $b$ takes the form of the function $k+3$. In some cases it is possible to explicitly find this function, so we will make a small digression before formally defining the generalized composites. We need the following notation.

\begin{definition}\label{curry}Let $f_{x_0}^{-1}:\mathbb{M}\times\mathbb{M}$ denote the inverse of the function $f_{x_0}(y):\mathbb{M}\rightarrow \mathbb{M}$ which is defined as $f_{x_0}(y)=f(x_0,y)$ for a fixed $x_0\in\mathbb{M}$.

Similarly, let $f_{y_0}^{-1}:\mathbb{M}\times\mathbb{M}$ denote the inverse of the function $f_{y_0}(x):\mathbb{M}\rightarrow \mathbb{M}$ which is defined as $f_{y_0}(x)=f(x,y_0)$ for a fixed $y_0\in\mathbb{M}$. \end{definition}

In other words, the function $f_{x_0}(y)$ was obtained by currying the binary function $f(x,y)$ in $x$ or  that the unary function $f_{x_0}(y)$ was obtained by fixing the first argument of the function $f(x,y)$.

In the above example for the operation $\oplus^{\mathbb{Z}}_{xy-3}$, we have that $f(x,y)=xy-3$. Thus, fixing the first argument, we can define the function $f_{x_0}(y)=x_0y-3$. Let's find the inverse of that function. We must have $f_{x_0}(f^{-1}_{x_0}(z))=z$, thus $z=f_{x_0}(f^{-1}_{x_0}(z))=x_0f^{-1}_{x_0}(z)-3.$ By isolating $f^{-1}_{x_0}(z)$ we find that $f^{-1}_{x_0}(z)=\frac{z+3}{x_0}$. If we take $x_0=1$, we have that $f^{-1}_{1}(z)=z+3$ and we observe that $1\oplus^{\mathbb{Z}}_{xy-3}f^{-1}_{1}(k)=1\oplus^{\mathbb{Z}}_{xy-3} (k+3)=k$. From these considerations, we can extract the following proposition.

\begin{proposition}Let the notation be as given in definition \ref{curry}. If $f_{x_0}$ is a bijective function and $u$ is a $\oplus^{\mathbb{M}}_{f(x,y)}$-left-unit, then for each $k\in\mathbb{M}$ there exists a unique $b\in\mathbb{M}$ such that $k=u\oplus^{\mathbb{M}}_{f(x,y)} b$. Furthermore, this $b$ is such that $b=f_u^{-1}(k)$.\end{proposition}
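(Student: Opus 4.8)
The plan is to reduce the statement about the binary operation $\oplus^{\mathbb{M}}_{f(x,y)}$ to an elementary statement about the single-variable curried function $f_u$, and then read off everything from the hypothesis that this function is a bijection. The first step is to unfold the definition of the operation: by definition $u\oplus^{\mathbb{M}}_{f(x,y)}b=f(u,b)$, and by definition \ref{curry} the right-hand side is exactly $f_u(b)$, where $f_u$ is obtained by fixing the first argument at $x_0=u$. Thus the equation $k=u\oplus^{\mathbb{M}}_{f(x,y)}b$ is literally the equation $k=f_u(b)$, and solving for $b$ becomes the problem of finding preimages of $k$ under $f_u$.

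For existence, I would invoke the left-unit hypothesis directly: since $u$ is a $\oplus^{\mathbb{M}}_{f}$-left-unit, its definition guarantees, for the given $k$, some $b\in\mathbb{M}$ with $k=u\oplus^{\mathbb{M}}_{f}b$, that is $f_u(b)=k$. (Equivalently, existence already follows from the surjectivity half of the bijectivity of $f_u$, so this hypothesis is in fact redundant once $f_u$ is assumed bijective.) For uniqueness, I would suppose $k=u\oplus^{\mathbb{M}}_{f}b$ and $k=u\oplus^{\mathbb{M}}_{f}b'$; unfolding both equalities gives $f_u(b)=k=f_u(b')$, and since $f_u$ is injective, being bijective, I conclude $b=b'$.

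For the explicit formula, I would apply $f_u^{-1}$ to both sides of $k=f_u(b)$: because $f_u$ is bijective its inverse $f_u^{-1}$ is well defined and satisfies $f_u^{-1}(f_u(b))=b$, whence $b=f_u^{-1}(k)$, as claimed. The one point needing care is the matching of notation, namely instantiating the generic fixed argument $x_0$ of definition \ref{curry} at $x_0=u$, so that the hypothesis ``$f_{x_0}$ bijective'' is being used precisely as ``$f_u$ bijective''; the fact that $f$ is merely a partial function causes no trouble here, since assuming $f_u$ to be a bijection of $\mathbb{M}$ forces $f_u$ to be total and invertible. Beyond this bookkeeping the argument is immediate, so I do not expect any genuine obstacle: the entire content is the observation that the left-unit property supplies a solution while injectivity of the curried map makes it unique.
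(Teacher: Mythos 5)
Your proposal is correct and takes essentially the same approach as the paper's proof: both unfold $u\oplus^{\mathbb{M}}_{f}b$ as $f_u(b)$ via definition \ref{curry}, use injectivity of the bijective $f_u$ for uniqueness, and identify the solution as $b=f_u^{-1}(k)$ (the paper verifies this formula directly via $f_u(f_u^{-1}(k))=k$ and then argues uniqueness by contradiction, whereas you establish existence and uniqueness first and then apply $f_u^{-1}$, a purely cosmetic reordering). Your side remark that the left-unit hypothesis is redundant once $f_u$ is assumed bijective is also accurate; indeed the paper's own proof never invokes it.
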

\begin{proof}We must verify that $b=f_u^{-1}(k)$ satisfies $k=u\oplus^{\mathbb{M}}_{f(x,y)} b$. We have that $u\oplus^{\mathbb{M}}_{f(x,y)} f_u^{-1}(k)= f(u,f_u^{-1}(k))$, but by definition \ref{curry}, we can write $f(u,f_u^{-1}(k))$ as $f_u(f_u^{-1}(k))$, hence we find that $u\oplus^{\mathbb{M}}_{f(x,y)} f_u^{-1}(k)=f_u(f_u^{-1}(k))=k$.

Suppose there is some $b'\neq f_u^{-1}(k)$ which satisfies $k=u\oplus^{\mathbb{M}}_{f(x,y)} b'$, thus we have $k=u\oplus^{\mathbb{M}}_{f(x,y)} b'=f(u,b')=f_u(b')$.We know by assumption that $f^{-1}_u(k)=b$, thus $k=f_u(b)$ by applying $f_u$ on both sides. Hence, we have that $f_u(b')=k$ and $f_u(b)=k$, a contradiction with $f_u$ being bijective.\end{proof}

As a side comment, we can also formally extend the concept of zero as follows.

\begin{definition}We say that $z\in\mathbb{M}$ is a
$\oplus_{f}$-left-zero if for all $k\in \mathbb{M}$ we have
$z\oplus_{f}k=z$ and that $z\in\mathbb{M}$ is a
$\oplus_{f}$-right-zero if for all $k\in \mathbb{M}$ we have
$k\oplus_{f}z=z$. \end{definition}

\subsubsection{Generalized Composites}

Let's now find an adequate definition for the generalized composites. A direct generalization of the usual composites definition could be:

\medskip

\noindent\textit{$m\in\mathbb{M}$ is a $\oplus^{\mathbb{M}}_{f}$-composite if
and only if $m=c\oplus^{\mathbb{M}}_{f}d$ and such that  $c$ and $d$ are not $\oplus^{\mathbb{M}}_{f}$-units or $m$ itself.}

\medskip

But, we would encounter at least two problems with this statement. For multiplication, we do not wonder if $1$ is a composite number. For arbitrary operations, a unit could be seen as composite. For example, taking again $\oplus^{\mathbb{Z}}_{xy-3}$, we know that $1$ is a $\oplus^{\mathbb{Z}}_{xy-3}$-left-unit, but also, $1=2\oplus^{\mathbb{Z}}_{xy-3}2$ which would mean that $1$ is a composite. Having a composite unit would make all integers into composites. In the present case, since $16\neq xy-3$ has no integer solutions for $x,y\neq 1$, assuming that $1$ is not a composite would make integers such as $16$ into non-composites (i.e. generalized primes) which gives a richer theory. Thus, we will include in the definition that generalized composites are not units.

Another problem is that there are operations such that for some $m'$ we have $m'=m'\oplus b$. An example is $\oplus^{\mathbb{N}}_{x^2-2y}$ for which $4=4\oplus_{x^2-y}6=16-12$ where $4$ is not a left unit and $6$ is not a right unit.

Thus, to avoid those issues, we define the generalized composites by making sure that they cannot be units and that the number itself can appear as one of its factors.

\begin{definition}$m\in\mathbb{M}$ is a $\oplus^{\mathbb{M}}_{f}$-composite if
and only if it is not a $\oplus^{\mathbb{M}}_{f}$-unit and there exists some $x,y\in\mathbb{M}$ such that $u$ is not a $\oplus^{\mathbb{M}}_{f}$-left-unit and $v$ is not a $\oplus^{\mathbb{M}}_{f}$-right-unit and $(u\oplus_f v)$ is a $\oplus^{\mathbb{M}}_{f}$-factor of $m$.\end{definition}

It is interesting to note that a right-unit could still appear as $u$  where $(u\oplus_f v)$ is a factor of $m$. For example, we have that $12$ is a $\oplus^{\mathbb{N}}_{x-y+8}$-composite since $12=8\oplus^{\mathbb{N}}_{x-y+8} 4$. We have that $4$ is not a right-unit since $3\neq c\oplus^{\mathbb{N}}_{x-y+8} 4$ for any $c\in\mathbb{N}$ and that $8$ is not a left-unit since $17\neq 8\oplus^{\mathbb{N}}_{x-y+8}d$ for any $d\in\mathbb{N}$. But, we have that $8$ is a right-unit since $k=k\oplus^{\mathbb{N}}_{x-y+8}8$.

\subsubsection{Generalized Primes}

We define the generalized primes as:

\begin{definition}$m\in\mathbb{M}$ is a
$\oplus^{\mathbb{M}}_{f}$-prime if and only if it is not a $\oplus^{\mathbb{M}}_{f(x,y)}$-unit and it is not a $\oplus_f$-composite.\end{definition}

By the definition of the generalized units, composites and primes, we have that an element of $\mathbb{M}$ is either a unit, a composite or a prime. In other words, the collection of the sets containing all units, composites and primes is a partition of $\mathbb{M}$.

After some logical manipulations, we find a more useful statement for generalized primes.

\begin{proposition}$m\in\mathbb{M}$ is a $\oplus^{\mathbb{M}}_{f}$-prime if
and only if it is not a $\oplus^{\mathbb{M}}_{f}$-unit and if for all $x,y\in\mathbb{M}$ we have that $x$ is not a $\oplus^{\mathbb{M}}_{f}$-left-unit and $y$ is not a $\oplus^{\mathbb{M}}_{f}$-right-unit implies that  $(x\oplus^{\mathbb{M}}_f y)$ is not a $\oplus_{f}$-factor of $m$.\end{proposition}

\begin{proof}Not being a
$\oplus_f$-composite is the negation of the statement of the definition of $\oplus_f$-composites.

Thus, by De Morgan's Law $\neg(p\wedge q)\Leftrightarrow (\neg p \vee \neg q)$ and the identity $\exists x,y\mathbb{M}P(x,y)\Leftrightarrow\forall x,y\mathbb{M} \neg P(x,y)$, not being a composite is equivalent to:

\medskip

\noindent ``is a $\oplus_{f}$-unit or [for all $x,y\in\mathbb{M}$ we have that $x$ is a $\oplus_{f}$-left-unit or $y$ is a $\oplus_{f}$-right-unit or $(x\oplus_f y)$ is not a factor of $m$].''

\medskip

By the De Morgan's Law and the identity $(\neg u\vee v)\Leftrightarrow (u\Rightarrow v)$ we have that
  $$(p\vee q \vee r)\Leftrightarrow(\neg(p\wedge q) \vee r)\Leftrightarrow ((p\wedge q) \Rightarrow r)$$
Thus, not being a composite is equivalent to:

\medskip

 \noindent ``is a $\oplus_{f}$-unit or [for all $x,y\in\mathbb{M}$ we have that $x$ is not a $\oplus_{f}$-left-unit and $y$ is not a $\oplus_{f}$-right-unit then $(x\oplus_f y)$ is not a factor of $m$].''

\medskip

Therefore since

\begin{center}\begin{tabular}{lll}
$u \wedge (\neg u \vee s)$&$\Leftrightarrow$&$(u\wedge \neg u)\vee (u\wedge s)$\\
&$\Leftrightarrow$&$0\vee (u\wedge s)$\\
&$\Leftrightarrow$&$u\wedge s$\\
\end{tabular}\end{center}
and that being a prime is equivalent by definition to:

\medskip

\noindent ``not a $\oplus_{f}$-unit and it is not a $\oplus_f$-composite,''

\medskip

\noindent we find by replacing it's statement ``not a composite'', that being a prime is indeed equivalent to the statement of the theorem.

\medskip

\end{proof}

As discussed before, the units are not considered to be composites, since this would imply that all elements of $\mathbb{M}$ would be composites if there is a unit in $\mathbb{M}$. Also, the units are not considered to be primes either since we want to be consistent with the definition of the usual primes. Note that
adding the restriction that the unit is not a prime becomes useful
when uniqueness factorization is discussed.

We see that the set of all $\oplus^{\mathbb{N}}_{x+y}$-primes, or $\oplus_{0}$-primes using the hyperoperation notation, is the set $\{1\}$.

The set of $\oplus^{\mathbb{N}}_{xy}$-primes, or $\oplus_{1}$-primes using the hyperoperation notation, are the usual primes and $1$
is a $\oplus^{\mathbb{N}}_{xy}$-left-unit and $\oplus^{\mathbb{N}}_{xy}$-right-unit . By Euclid's proof, we know that the set of primes is infinite and we note that it includes the set of
$\oplus_0$-primes.

The set of $\oplus^{\mathbb{N}}_{x^y}$-primes are all the positive integers that cannot be
written in the form $x^y$ for $y>1$ and $x>1$. This includes all the $\oplus_{ab}$-primes
(the usual primes) but also
includes many $\oplus^{\mathbb{N}}_{ab}$-composites such as $6,10,12,14,...$. Here
is the list of all the $\oplus^{\mathbb{N}}_{x^y}$-primes up to $28$:

$$\{1,2,3,5,6,7,10,11,12,13,14,15,17,18,19,20,21,22,23,24,26,28\}.$$

For hyperoperations, the statement  ``the set of $\oplus_i$-primes contains the set of $\oplus_{i-1}$-primes'' might be an important fact regarding the development of a `fundamental theorem of hyperarithmetic'.

If $f(x,y)=1$, we have that the set of $\oplus^{\mathbb{N}}_{f(x,y)}$-primes are every integer higher than $1$.

Also, if $k>1$ and $f(x,y)=ky$, we have that there is no $\oplus^{\mathbb{Z}}_{ky}$-right-unit, since if we suppose $u$ to be a unit, then all $m$ can be written as $m=c\oplus^{\mathbb{Z}}_{ky}u=ku$, a contradiction since there are integers which are not multiples of $k$. We also have that the set $C^{\mathbb{Z}}_{ky}$ of $\oplus^{\mathbb{Z}}_{ky}$-composites consists of all multiples of $k$ since $kn=b\oplus^{\mathbb{Z}}_{ky}n$ for any $b$ which is not a $\oplus^{\mathbb{Z}}_{ky}$-left-unit and $n\geq 1$. Note that we can write $C^{\mathbb{Z}}_{ky}=k\mathbb{Z}$ which means that $C^{\mathbb{Z}}_{ky}$ can also be seen as an ideal.

\subsection{Diophantine Equations}

We can now associate to each binary operation a set of natural numbers
that exhibit properties similar to the usual primes. In elementary number
theory, there are many theorems and tools that uses the prime concept.
We will now show that it is possible to recast Diophantine problems as problems about primes.

\begin{definition}Let $f:\mathbb{M}\times\mathbb{M}\rightarrow\mathbb{M}$ be a partial function in two variables, then a solution $(x_0, y_0)$ to the equation $f(x,y)=c$ for a fix $c\in \mathbb{M}$ and for $x_0, y_0\in\mathbb{M}$ is said to be \emph{trivial in $x$} if for each $z\in \mathbb{M}$ there is an element $b\in\mathbb{M}$ such that $f(x_0,b)=z$.

Similarly, a solution $(x_0, y_0)$  is said to be \emph{trivial in $y$} if for each $z\in \mathbb{Z}$ there is an element $b\in\mathbb{N}$ such that $f(b,y_0)=z$. Every solution which is not trivial in $x$ and in $y$ is said to be \emph{non-trivial}.\end{definition}

Fix $c$, then an example of a trivial solution in $y$ to the equation $(x^6+y^3)^{\frac{1}{6}}=c$ is the solution $(c,0)$, because for all $z\in\mathbb{Z}$ we can take $b=z$ and find that $f(z,0)=((z)^6+(0)^3)^{\frac{1}{6}}=z$. For the same equation, a trivial solution in $x$ is $(0,c^2)$, since for all $z\in\mathbb{Z}$ we can take $b=z^2$ and find that $f(0,z^2)=((0)^6+(z^2)^3)^{\frac{1}{6}}=z$.

We now have a proposition linking generalized composites to solution of certains diophantine equations.

\begin{theorem}\label{dio}Fix $c\in\mathbb{Z}$ , let $f:\mathbb{Z}\times\mathbb{Z}\rightarrow\mathbb{Z}$ be a polynomial in two variables, then $f(a,b)=c$ has a non-trivial solution $(a_0,b_0)$ for $a_0,b_0\in\mathbb{Z}$ if and only if $c$ is $\oplus_{f}$-composite.\end{theorem}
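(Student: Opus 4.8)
The plan is to translate the Diophantine vocabulary of the statement into the unit vocabulary of the preceding definitions, after which the equivalence becomes an almost mechanical matching of the two definitions. First I would unwind the definition of a trivial solution: a solution $(a_0,b_0)$ of $f(a,b)=c$ is trivial in $x$ exactly when, for every $z\in\mathbb{Z}$, there is a $b$ with $f(a_0,b)=z$, which is precisely the statement that $a_0$ is a $\oplus_f$-left-unit; symmetrically, triviality in $y$ says exactly that $b_0$ is a $\oplus_f$-right-unit. Hence a non-trivial solution of $f(a,b)=c$ is nothing but a pair $(a_0,b_0)$ with $f(a_0,b_0)=c$ such that $a_0$ is not a $\oplus_f$-left-unit and $b_0$ is not a $\oplus_f$-right-unit.

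With this dictionary in hand the forward direction is short. Given a non-trivial solution $(a_0,b_0)$, set $u=a_0$ and $v=b_0$. Since $f(a_0,b_0)=c$, the combination $(u\oplus_f v)$ is a $\oplus_f$-representation of $c$ and therefore a $\oplus_f$-factor of $c$, being an occurrence in itself, while $u$ is not a left-unit and $v$ is not a right-unit. This supplies verbatim the existential clause in the definition of a $\oplus_f$-composite, so only the clause that $c$ itself is not a $\oplus_f$-unit remains to be verified.

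For the converse I would start from $c$ being a $\oplus_f$-composite, extract the witnesses $u,v$ with $u$ not a left-unit, $v$ not a right-unit, and $(u\oplus_f v)$ a factor of $c$, and then read off a solution of $f(a,b)=c$. In the clean case $(u\oplus_f v)$ is itself a representation of $c$, that is $f(u,v)=c$, and then $(u,v)$ is by construction a non-trivial solution and we are done.

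The hard part, and where the real care is needed, is twofold. In the forward direction one must rule out that $c$ is a unit, since the definition explicitly forbids a composite from being a unit; this does not follow formally from the bare existence of a non-trivial solution, as one can arrange polynomials (for instance $f(x,y)=y(x^2-x+1)$ with $c=0$, where $f(0,y)=y$ makes $0$ a left-unit yet $f(2,0)=0$ is the image of a non-unit pair) in which a value is simultaneously a unit and the image of a non-trivial pair, so this clause is the delicate one and may require an extra hypothesis on $f$. In the converse the notion of factor permits $(u\oplus_f v)$ to sit as a nested occurrence deep inside a representation of $c$ rather than at the top level, so that $f(u,v)=c$ need not hold directly; the obstacle is to promote such a nested factorization to a genuine top-level equality $c=f(a_0,b_0)$ with $a_0$ not a left-unit and $b_0$ not a right-unit. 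Reconciling these two points — securing the unit clause and descending from a nested factor to a bona fide top-level solution — is what I expect to be the crux of the argument.
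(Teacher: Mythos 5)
Your dictionary plus the two ``easy'' implications you carry out are, almost word for word, the paper's entire proof: the paper deduces from a non-trivial solution $(a_0,b_0)$ that $a_0$ is not a left-unit and $b_0$ is not a right-unit and immediately concludes ``thus $c$ is a $\oplus_f$-composite,'' and for the converse it writes a composite $c$ directly as $c=a_0\oplus_f b_0=f(a_0,b_0)$ with $a_0$ not a left-unit and $b_0$ not a right-unit. In other words, the paper silently reads ``$c$ is $\oplus_f$-composite'' as ``$c=f(u,v)$ for some $u$ that is not a left-unit and some $v$ that is not a right-unit.'' The two points you single out as the crux are exactly the two discrepancies between this reading and the definition of composite actually adopted in the paper (the clause that a composite must not be a unit, and the fact that a factor need only occur somewhere inside a representation rather than at top level), and the paper addresses neither of them: there is no hidden lemma for you to find.

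Moreover, your judgment that these are genuine obstructions, and not steps you failed to see, is correct, and your counterexample is valid: for $f(x,y)=y(x^2-x+1)$ one has $f(0,y)=y$, so $0$ is a left-unit and hence by the paper's definition not a composite, while $(2,0)$ is a non-trivial solution of $f(a,b)=0$ because $f(2,y)=3y$ and $f(x,0)=0$ both miss the value $1$. The same polynomial also refutes the converse direction through your second obstacle: the units of this operation are exactly $0$ and $1$ (there are no right-units), the only solutions of $f(a,b)=2$ are $(0,2)$ and $(1,2)$, both trivial in $x$, and yet $2$ is composite in the paper's literal sense, since $((2\oplus_f 0)\oplus_f 2)$ evaluates to $2$ (as $f(2,0)=0$ and $f(0,2)=2$) and contains the occurrence $(2\oplus_f 0)$, a combination of the non-left-unit $2$ with the non-right-unit $0$. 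So the theorem as literally stated is false in both directions; it becomes true, with exactly your mechanical matching as the complete proof, once ``$c$ is $\oplus_f$-composite'' is taken to mean the top-level condition ``$c=f(u,v)$ with $u$ not a left-unit and $v$ not a right-unit.'' The right resolution is therefore not to hunt for arguments closing the two gaps, but to state this repaired definition (or an added hypothesis) explicitly — which is what the paper's proof assumes implicitly.
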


\begin{proof}($\Rightarrow$) Let $(a_0,b_0)$ be a non-trivial solution. Hence we have that $c=f(a_0,b_0)=a_0\oplus_f b_0$. If $a_0$ is a $\oplus_f$-left-unit, then $a_0\oplus_f b=k$ for all $k$, a contradiction with the non-triviality of the solution. In this manner, we have that $a_0$ is not a $\oplus_f$-left-unit and  $b_0$ is not a $\oplus_f$-right-unit, thus $c$ is a $\oplus_{f}$-composite.

($\Leftarrow$) Conversely, if $c$ is a $\oplus_{f}$-composite, we have
$c=a_0\oplus_{f}b_0=f(a_0,b_0)$ where $a_0$ is not a $\oplus_f$-left-unit and $b_0$ is not a $\oplus_f$-right-unit, thus $(a_0,b_0)$ is not a trivial solution.
 \end{proof}

\begin{theorem}\label{dio-prime-correspondance}Fix $c\in\mathbb{Z}$ , let $f:\mathbb{Z}\times\mathbb{Z}\rightarrow\mathbb{Z}$ be a polynomial in two variables and $h:\mathbb{Z}\rightarrow\mathbb{Z}$ be a polynomial, then $f(a,b)=h(c)$ does not have a non-trivial solution $(a_0,b_0)$ for $a_0,b_0\in\mathbb{Z}$ if and only if $c$ is $\oplus_{f}$-prime.\end{theorem}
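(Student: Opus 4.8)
The plan is to derive the statement from the correspondence already proved in Theorem~\ref{dio} together with the definition of $\oplus_f$-primes, and then to confront the one genuinely delicate point, namely the units.

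First I would invoke the fact that the $\oplus_f$-units, $\oplus_f$-composites and $\oplus_f$-primes partition $\mathbb{Z}$, so that ``$c$ is not a $\oplus_f$-composite'' is equivalent to ``$c$ is a $\oplus_f$-unit or a $\oplus_f$-prime''. Theorem~\ref{dio}, applied with the fixed target $h(c)$ in place of $c$, states that $h(c)$ is a $\oplus_f$-composite if and only if $f(a,b)=h(c)$ has a non-trivial solution. Its contrapositive is the backbone of the argument: $f(a,b)=h(c)$ has no non-trivial solution if and only if $h(c)$ is a $\oplus_f$-unit or a $\oplus_f$-prime. For the direction $(\Leftarrow)$ I would assume $c$ is a $\oplus_f$-prime, hence not a composite, and read off from Theorem~\ref{dio} that the corresponding equation has no non-trivial solution. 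For $(\Rightarrow)$ I would assume no non-trivial solution exists, conclude from the backbone that the target is a unit or a prime, and then eliminate the unit alternative to land on ``prime''.

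Eliminating that unit alternative is where I expect essentially all the difficulty to sit, and it comes in two coupled pieces. On the one hand, the raw output of Theorem~\ref{dio} is only ``not composite'', which is weaker than ``prime'' precisely by the units; indeed, for $\oplus^{\mathbb{Z}}_{xy}$ the equation $ab=1$ has only the solutions $(\pm 1,\pm 1)$, whose first coordinates are all left-units, so $f(a,b)=1$ has no non-trivial solution even though $1$ is a unit and not a prime. On the other hand, the conclusion concerns $c$ while the equation concerns $h(c)$, so the two must be tied together. I therefore read the auxiliary polynomial $h$ as the device meant to send $c$ to a target $h(c)$ at which the ``unit'' case cannot arise, and the heart of the proof would be to identify the admissible $h$ and to show that for it the surjectivity (left/right-unit) conditions on $h(c)$ force $c$ itself to be a non-unit. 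Pinning down this interaction between the one-variable substitution $h$ and the unit conditions is the step demanding the most care; once it is in place, the equivalence follows by mechanically combining Theorem~\ref{dio}, the definition of $\oplus_f$-prime and the unit/composite/prime partition.
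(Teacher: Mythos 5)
Your reduction through Theorem~\ref{dio} is exactly the paper's own approach---the paper's entire proof of this theorem is the single sentence ``the proof is similar to the proof of theorem \ref{dio}''---but your proposal stops precisely where an actual proof would have to begin. The two difficulties you isolate are real: Theorem~\ref{dio} only yields ``$h(c)$ is not a $\oplus_f$-composite,'' which by the unit/composite/prime partition means ``$h(c)$ is a prime \emph{or a unit},'' and the equation constrains $h(c)$ while the asserted conclusion is about $c$. But you never carry out the step you yourself call the heart of the proof (identifying the admissible $h$ and eliminating the unit alternative); you only announce it. Moreover, your ($\Leftarrow$) direction is a non sequitur as written: from ``$c$ is prime, hence not composite'' you invoke Theorem~\ref{dio} to conclude that the equation has no non-trivial solution, but Theorem~\ref{dio} applies to the \emph{target} of the equation, which is $h(c)$, and primality of $c$ gives no information about whether $h(c)$ is composite.

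The reason you could not close the gap is that it cannot be closed: the statement as printed is false, so no analysis of ``admissible $h$'' can rescue it ($h$ is an arbitrary polynomial in the hypothesis). Against ($\Leftarrow$): take $f(x,y)=xy$ and $h(z)=z^2$; then $c=5$ is an $\oplus^{\mathbb{Z}}_{xy}$-prime, yet $f(a,b)=h(5)=25$ has the non-trivial solution $(5,5)$, since $5$ is neither a left- nor a right-unit for multiplication over $\mathbb{Z}$. Against ($\Rightarrow$): take $h(z)=z$ and $c=1$; every integer solution of $ab=1$ has both coordinates in $\{1,-1\}$, which are units, so there is no non-trivial solution, yet $1$ is an $\oplus^{\mathbb{Z}}_{xy}$-unit and hence not a prime. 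What does survive, and what the paper actually uses in the proposition that follows (finitely many $\oplus_f$-primes forces solvability), is the implication obtained by combining Theorem~\ref{dio} with the partition, stated about $h(c)$ rather than $c$: if $f(a,b)=h(c)$ has no non-trivial solution, then $h(c)$ is an $\oplus_f$-prime or an $\oplus_f$-unit. So the honest outcome of your plan is not a proof but a correction of the statement; your instinct about where the difficulty sits was exactly right, and pressing on it exposes an error that the paper's one-line proof glosses over.
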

\begin{proof}The proof is similar to the proof of theorem \ref{dio}.\end{proof}

The previous theorems are very simple in nature, but they have the
advantage of reformulating some diophantine problems into questions about
generalized composites and primes and vice versa. Since there are many tools concerning the primes we can hope to extend those tools and use them to answer some questions about diophantine
equations. In particular, we could extend certain primality tests which rely on the Fermat's little theorem. In section \ref{fermatlittlesection}, we are taking a few steps in this direction.

\begin{proposition}If $f:\mathbb{Z}\times\mathbb{Z}\rightarrow\mathbb{Z}$ is a polynomial in two variables and $h:\mathbb{Z}\rightarrow\mathbb{Z}$ is a polynomial such that the image of $\mathbb{Z}$ under $h$ is an infinite subset of $\mathbb{Z}$ and if there is a finite number of
$\oplus^{\mathbb{Z}}_{f}$-primes then the diophantine equation $h(z)=f(x,y)$
has a non-trivial solution $(x_0,y_0,z_0)$.\end{proposition}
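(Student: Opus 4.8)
The plan is to reduce the statement, via Theorem~\ref{dio}, to a counting argument about the partition of $\mathbb{Z}$ into units, composites and primes, and then to argue by contradiction. First I would observe that a triple $(x_0,y_0,z_0)$ is a non-trivial solution of $h(z)=f(x,y)$ exactly when $(x_0,y_0)$ is a non-trivial solution of $f(x,y)=c$ for the fixed value $c=h(z_0)$; so by Theorem~\ref{dio} such a triple exists if and only if some value $h(z_0)$ in the image of $h$ is a $\oplus_f$-composite. Hence it suffices to prove that $h(\mathbb{Z})$ contains at least one $\oplus_f$-composite. Suppose it does not. Since every integer is either a $\oplus_f$-unit, a $\oplus_f$-composite or a $\oplus_f$-prime, every element of $h(\mathbb{Z})$ would then be a unit or a prime. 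Because $h(\mathbb{Z})$ is infinite while there are only finitely many $\oplus_f$-primes, infinitely many elements of $h(\mathbb{Z})$ would have to be $\oplus_f$-units.

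The crux is therefore to show that a polynomial operation admits only finitely many units, which contradicts the previous line. I would unfold the definition: $u$ is a $\oplus_f$-left-unit precisely when the one-variable map $y\mapsto f(u,y)$ is surjective onto $\mathbb{Z}$, and symmetrically for right-units with $x\mapsto f(x,u)$. Writing $f(x,y)=\sum_{i,j}a_{ij}x^iy^j$, the map $y\mapsto f(u,y)$ is a polynomial in $y$ whose coefficients $c_j(u)=\sum_i a_{ij}u^i$ are themselves polynomials in $u$. A polynomial from $\mathbb{Z}$ to $\mathbb{Z}$ is surjective only when it is affine with leading coefficient $\pm 1$, so surjectivity forces $c_j(u)=0$ for every $j\ge 2$ together with $c_1(u)=\pm 1$. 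Each of these is a polynomial equation in $u$, so provided at least one of the coefficient-polynomials involved is non-constant it has only finitely many integer roots, and the set of left-units is finite; the same reasoning bounds the right-units, yielding the desired contradiction and completing the proof through Theorem~\ref{dio}.

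The step I expect to be the genuine obstacle is exactly this finiteness of units, because it can fail for the degenerate \emph{shift-like} operations. If $f$ is affine in $y$ with leading coefficient identically $\pm 1$, say $f(x,y)=y+g(x)$, then $y\mapsto f(u,y)$ is surjective for \emph{every} $u$, so every integer is a left-unit; in that case there are no composites at all and the conclusion is simply false. Thus the sketched argument only closes when $f$ is not affine-unimodular in either variable, and I would either append the hypothesis that $\oplus_f$ has finitely many units (which is the property actually being exploited) or explicitly exclude these shift-like forms. Under such a hypothesis the root-counting of the second paragraph bounds the units, the infinitude of $h(\mathbb{Z})$ together with the finiteness of the primes forces a $\oplus_f$-composite into the image of $h$, and Theorem~\ref{dio} then produces the required non-trivial solution $(x_0,y_0,z_0)$.
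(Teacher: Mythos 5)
Your proposal follows the same counting skeleton as the paper's own proof---assume no non-trivial solution exists, conclude that every element of the infinite set $h(\mathbb{Z})$ fails to be a $\oplus_f$-composite, and contradict the finiteness of the primes---but you carry it out more carefully than the paper does, and in doing so you have exposed a real error. The paper's proof invokes Theorem \ref{dio-prime-correspondance} to pass directly from ``$f(x,y)=h(z_0)$ has no non-trivial solution'' to ``$h(z_0)$ is a $\oplus_f$-prime.'' As you observe, negating the composite condition only yields ``$h(z_0)$ is a $\oplus_f$-prime \emph{or} a $\oplus_f$-unit,'' since units, composites and primes partition $\mathbb{Z}$; so the argument needs the units appearing in $h(\mathbb{Z})$ to be controlled, and neither the proposition's hypotheses nor Theorem \ref{dio-prime-correspondance} (whose proof is only asserted to be ``similar to'' that of Theorem \ref{dio}, a proof which addresses composites, not units) controls them.

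Your diagnosis that this gap is fatal to the statement as written is also correct, and your counterexample is genuine: for $f(x,y)=x+y$, or more generally $f(x,y)=\pm y+g(x)$, every integer is a $\oplus_f$-left-unit, hence there are no $\oplus_f$-composites and no $\oplus_f$-primes at all; the hypothesis ``finitely many $\oplus^{\mathbb{Z}}_{f}$-primes'' holds vacuously and $h(z)=z$ has infinite image, yet every solution of $z=x+y$ is trivial, so the asserted non-trivial solution does not exist. Your repair is the right one: adding the hypothesis that $\oplus_f$ has only finitely many units (equivalently, by your coefficient and root-counting argument, excluding the forms $f=\pm y+g(x)$ and $f=\pm x+g(y)$, which are exactly the polynomial operations with infinitely many units), the counting argument forces a composite into $h(\mathbb{Z})$, and the backward direction of Theorem \ref{dio} then produces the required triple. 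In short, your proposal is sound precisely where the paper's proof is not; the proposition needs your extra hypothesis, or else Theorem \ref{dio-prime-correspondance} must be corrected to read ``prime or unit,'' which breaks the paper's one-line deduction.
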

\begin{proof}Suppose $h(z)=f(x,y)$ has no non-trivial solution for all
$z\in\mathbb{Z}$, then by the previous theorem, each element of the
 image of $\mathbb{Z}$ under $h$ must be a $\oplus_{f}$-prime. Hence a contradiction, since
there is only a finite number of $\oplus^{\mathbb{Z}}_{f}$-primes and  the image of $\mathbb{Z}$ under $h$ is an infinite subset.\end{proof}

We will see in section \ref{multi} that the concepts seen in the present section can be constructed for a more general setting.

\subsection{Uniqueness and the Riemann Zeta Function}

We will define in this section a certain L-series associated to an operation $\oplus^{\mathbb{M}}_f$ and we will see that when this L-series is equal to the Riemann Zeta Function under certain conditions, it means that each element of $\mathbb{M}$ has a unique factorization.

The Euler product formula provides a way to encode primes into a Dirichlet series. If $s>1$ and $p_k$ is the $k^{th}$ common prime (or $\oplus_{xy}$-prime), then
$$\displaystyle\prod_{k=1}^{\infty} \frac{1}{1-p_k^{-s}}=\displaystyle\sum_{n=1}^{\infty} \frac{1}{n^s}=\zeta(s).$$

In general, we would also like to encode generalized primes, but we remark that the Euler product formula relies on the fact that multiplication is associative, commutative and distributive over addition. Looking at the proof of the Euler product formula, there is an intermediary formula which we can use.

 \begin{eqnarray} \displaystyle\prod_{k=1}^{\infty} \frac{1}{1-p_k^{-s}}&=& \displaystyle\prod_{k=1}^{\infty}\left[\displaystyle\sum^{\infty}_{i=1} \left(\frac{1}{p_k^s}\right)^{i}\right]\\
&=&1+\displaystyle\sum_{1\leq i}\frac{1}{p_i^s}+\displaystyle\sum_{1\leq i\leq j}\frac{1}{p_i^sp_j^s}+\displaystyle\sum_{1\leq i\leq j\leq k}\frac{1}{p_i^sp_j^sp_k^s}+...\\
&=& \displaystyle\sum_{n=1}^{\infty} \frac{1}{n^s}
\end{eqnarray}

Here, equation 2 is obtained by the geometric series formula, equation 3 is obtained by the distributive law and 3 is obtained because of the fundamental theorem of arithmetic. Since for an arbitrary operation $\oplus_f$ we do not necessarily have commutativity, associativity and distributivity, it is not clear if a higher geometric series can be found. The main focus here is to provide a way to encode our generalized primes into a Dirichlet series, hence we can more or less start at equation 2. We explore this possibility in the following.

\begin{definition}The set of all $\oplus^{\mathbb{M}}_f$-composites will be denoted by $C^{\mathbb{M}}_f$ and the set of all $\oplus^{\mathbb{M}}_f$-primes will be denoted by $P^{\mathbb{M}}_f$.
\end{definition}

\begin{definition}We define the set of $\oplus^{\mathbb{M}}_{f}$-\emph{prime combinations}
$F_f$ as:
\begin{itemize}
\item[1.] if $u$ is a $\oplus_{f}$-unit then $u\in F_f$;

\item[2.] if $p\in P^{\mathbb{M}}_{f}$ then $p\in F_f$,

\item[3.] if $x,y\in F_f$ and are not units then $(x\oplus_{f}y)\in F_f$.

\end{itemize}
\end{definition}

\begin{definition}\label{lseries}Let $f:\mathbb{M}\rightarrow\mathbb{M}$, $L_{T_f}:\mathds{C}\rightarrow\mathds{C}$, $\mathbb{M}\subseteq\mathbb{C}$ and $T_f\subseteq F_f$, then we define the \emph{$L_{T_f}$-series}
$L_{T_f}$ as $$L_{T_f}(s)=\displaystyle\sum_{t\in T_f} \frac{1}{t^s}$$\end{definition}

Here, we put the restriction $\mathbb{M}\subseteq\mathbb{C}$, since we want each $t\in T_f$ to be in $\mathbb{C}$ so that we have that each component $\frac{1}{t^s}$ of the sum is in $\mathbb{C}$. In the following,  we will mostly restrict ourselves to binary operations with $f:\mathbb{N}\times\mathbb{N}\rightarrow \mathbb{N}$.

For example, take $f:\mathbb{N}\rightarrow\mathbb{N}$ with $f(x,y)=xy$ and take $$T_f=\{3,5,3\oplus_f 5,5\oplus_f 3\},$$ then

 \begin{eqnarray} L_{T_f}(s)&=&  \frac{1}{(3)^s}+ \frac{1}{(5)^s}+ \frac{1}{(3\oplus_f 5)^s}+ \frac{1}{(5\oplus_f 3)^s}\nonumber \\
&=& \frac{1}{(3)^s}+ \frac{1}{(5)^s}+ \frac{1}{(15)^s}+ \frac{1}{(15)^s}\nonumber \\
&=&  \frac{1}{3^s}+ \frac{1}{5^s}+ \frac{2}{15^s}\end{eqnarray}

\begin{proposition}If $f:\mathbb{N}\times\mathbb{N}\rightarrow \mathbb{N}$ and if $c_{i}$ is the number of elements of $T_f$ which are equal to $i$, then $L_{T_f}(s)$ can be rearranged as follow: $$L_{T_f}(s)=\displaystyle\sum_{t\in T_f} \frac{1}{t^s}=\displaystyle\sum_{i=1}^{\infty} \frac{c_{i}}{i^s}.$$\end{proposition}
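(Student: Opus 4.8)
The plan is to read the claimed identity as a regrouping of the terms of the series $\sum_{t\in T_f} 1/t^s$ according to the numerical value taken by each prime combination. First I would make explicit that, although the elements $t\in T_f$ are $\oplus_f$-combinations (formal expressions built by the combination rules), each such $t$ evaluates, via repeated application of $a\oplus_f b=f(a,b)$, to a well-defined element of $\mathbb{M}=\mathbb{N}$; write $v(t)$ for this value. The summand $1/t^s$ appearing in Definition \ref{lseries} is by convention $1/v(t)^s$, and therefore depends only on $v(t)$ and not on the syntactic form of $t$. In particular any two distinct combinations with the same value contribute the same term, which is exactly the phenomenon illustrated by $3\oplus_f 5$ and $5\oplus_f 3$ both contributing $1/15^s$ in the worked example preceding the proposition.

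Next I would partition the index set $T_f$ into its fibres over $\mathbb{N}$: for each $i\in\mathbb{N}$ set $T_f^{(i)}=\{t\in T_f : v(t)=i\}$, so that $T_f=\bigsqcup_{i=1}^\infty T_f^{(i)}$ is a disjoint union and, by the definition in the statement, $\lvert T_f^{(i)}\rvert = c_i$. Summing inside a single fibre gives $\sum_{t\in T_f^{(i)}} 1/t^s = \sum_{t\in T_f^{(i)}} 1/i^s = c_i/i^s$, since each of the $c_i$ terms in that fibre equals $1/i^s$. Bracketing the original sum into these blocks and summing the blocks over all $i$ then yields $\sum_{i=1}^\infty c_i/i^s$, which is the claimed right-hand side.

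The one genuinely non-formal step, and the place to be careful, is the interchange $\sum_{t\in T_f} = \sum_{i=1}^\infty \sum_{t\in T_f^{(i)}}$, i.e. the legitimacy of regrouping a possibly infinite series into blocks. If $T_f$ is finite this is merely the commutativity of a finite sum and needs no comment. In the infinite case I would restrict to the region where the Dirichlet series converges absolutely (for instance $\mathrm{Re}(s)>1$ when the values do not repeat too wildly), where rearranging and bracketing terms leaves the sum unchanged; equivalently, for real $s$ all terms $1/i^s$ are non-negative, so the regrouping is valid in $[0,\infty]$ with no convergence hypothesis at all. This step also silently requires each $c_i$ to be finite, so that $c_i/i^s$ is a well-defined complex number; I would either impose this as a standing hypothesis or note that it holds automatically for the operations of interest, for example for multiplication, where a fixed integer $i$ admits only finitely many $\oplus_f$-prime combinations. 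Everything else is bookkeeping, so I expect the convergence/finiteness bookkeeping to be the only point requiring real attention.
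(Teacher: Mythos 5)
Your proof is correct and takes essentially the same route as the paper's: the paper's entire argument is the observation that every combination in $T_f$ evaluates to a positive integer, so the terms equal to $\frac{1}{i^s}$ can be gathered into $\frac{c_i}{i^s}$. Your extra care about the legitimacy of regrouping the infinite series (absolute convergence, or non-negativity of terms for real $s$) and about the finiteness of each $c_i$ addresses points the paper passes over silently, but it is the same gather-by-value decomposition.
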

\begin{proof}The elements of $T$ are combinations of the $\oplus_f$-primes and give after computation a positive integer. For each $i$, we can gather all the terms of the form $\frac{1}{i^s}$ and write $\frac{c_i}{i^s}$ instead. Hence $\displaystyle\sum_{t\in T} \frac{1}{t^s}=\displaystyle\sum_{i=1}^{\infty} \frac{c_{i}}{i^s}$ where $c_{i}\in \mathbb{N}\cup\{0\}$. \end{proof}

In definition \ref{lseries}, take $T=F_{xy}$ such that the set $F_{xy}$ is the set of $\oplus^{\mathbb{N}}_{xy}$-formulas built from the usual multiplication. Thus we have $$L_{F_{xy}}(s)=\displaystyle\sum_{i=1}^{\infty} \frac{c_{i}}{i^s}.$$
Since the $c_i$ count the number of elements of $F_{xy}$ which are equal to $i$ and since for multiplication we have the uniqueness of representation, $c_i$ will count the number of permutations of the primes and the number of associative ways to write the brackets. Not counting these provides a way to directly connect this L-series to the Zeta function which will be the purpose of the following statements.

We now define the set of formulas modulo associativity and commutativity. Note that we could also define this by using quotient sets, but there is an advantage to this approach, since it invites the construction of subsets that are not necessarily built by using relations.

\begin{definition}Let $F_f$ be the set of $\oplus^{\mathbb{M}}_{f}$-\emph{prime combinations}, we define the set of \emph{$\oplus^{\mathbb{M}}_{f}$-prime combinations modulo associativity and commutativity}
$T_{AC}$ as:
\begin{itemize}
\item[1.] if $u$ is a $\oplus^{\mathbb{M}}_{f}$-unit then $u\in T_{AC}$,

\item[2.] if $p\in P^{\mathbb{M}}_{\oplus_{f}}$ then $p\in T_{AC}$,

\item[3.] if $x,y\in F_f$ and are not units and $(x\oplus_{f}y)\notin T_{AC}$ then $(y\oplus_{f}x)\in T_{AC}$,

\item[4.] if $x,y,z\in F_f$ and are not units and $(x\oplus_{f}(y\oplus_{f}z))\notin T_{AC}$ \\ then  $((x\oplus_{f}y)\oplus_{f}z)\in T_{AC}$.
\end{itemize}
\end{definition}

\begin{proposition}If the operation $\oplus^{\mathbb{N}}_f$ is commutative and associative, then for each positive integer $n$ there is at least an element $t\in T_{AC}$ such that $n=t$.\end{proposition}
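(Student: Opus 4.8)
The plan is to argue by strong induction on the positive integer $n$, leaning on the earlier observation that every element of $\mathbb{N}$ is exactly one of a $\oplus^{\mathbb{N}}_f$-unit, a $\oplus^{\mathbb{N}}_f$-prime, or a $\oplus^{\mathbb{N}}_f$-composite. The base cases are immediate from the definition of $T_{AC}$: if $n$ is a unit then $n\in T_{AC}$ by clause~1, and if $n$ is a prime then $n\in T_{AC}$ by clause~2; in either case $n$ itself is the required element $t$ with $n=t$.

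The content lies in the composite case, and here I would first use commutativity. Since $\oplus^{\mathbb{N}}_f$ is commutative, the notions of left-unit and right-unit coincide, so a composite $n$ admits a witnessing factor $(u\oplus_f v)$ in which \emph{both} $u$ and $v$ are non-units. Then, using associativity and commutativity to flatten the representation in which this factor occurs, I would extract a genuine top-level splitting $n=a\oplus_f b$ with $a,b$ non-units and $a,b<n$. The inductive hypothesis supplies $t_a,t_b\in T_{AC}$ with $t_a=a$ and $t_b=b$. Because $t_a,t_b\in F_f$ are non-units, clause~3 of the definition of $T_{AC}$ forces one of $(t_a\oplus_f t_b)$ or $(t_b\oplus_f t_a)$ into $T_{AC}$, and by commutativity both evaluate to $a\oplus_f b=n$; clause~4 is invoked in the same spirit to absorb the remaining bracketing freedom, so that the normalized combination lands in $T_{AC}$ while keeping the value $n$. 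This yields the desired $t\in T_{AC}$ with $t=n$.

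The hard part, and the step I would scrutinize most, is exactly this flattening in the composite case: turning the bare statement ``$(u\oplus_f v)$ is an occurrence in \emph{some} representation of $n$, with $u,v$ non-units'' into a top-level decomposition $n=a\oplus_f b$ into two non-units that are moreover strictly smaller than $n$. Neither the non-unit property of the two top-level pieces nor the strict decrease is automatic. For example, with $f(x,y)=\gcd(x,y)$ one has $n=\gcd(2n,3n)$, so every available splitting has factors larger than $n$ and there is no well-founded measure for the induction to descend on. I therefore expect the clean statement to need an auxiliary hypothesis — a descent/well-founded factorization condition on $\oplus^{\mathbb{N}}_f$ guaranteeing that composites break into strictly smaller non-unit factors — which is satisfied by the motivating operations such as ordinary addition and multiplication. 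Under such a condition the induction closes, and the commutativity–associativity normalization through clauses~3 and~4 becomes routine bookkeeping.
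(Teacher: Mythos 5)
Your reading of the situation is more careful than the paper's own. The paper does not run your bottom-up induction; it argues by contradiction: suppose a composite integer $c$ has no representative in $T_{AC}$, observe that rules 3 and 4 only ever exclude a formula $(y\oplus_f x)$ or $(x\oplus_f(y\oplus_f z))$ from $T_{AC}$ when its commuted or reassociated counterpart is already in $T_{AC}$, and conclude that by commutativity and associativity that counterpart evaluates to the same integer $c$, contradicting the supposition. But that argument opens with ``if the composite is of the form $(y\oplus_f x)$'' with $x,y\in F_f$ --- that is, it silently assumes $c$ admits at least one representation as a prime combination (an element of $F_f$) in the first place. That is exactly the termination issue you isolated: commutativity and associativity alone do not guarantee that repeatedly splitting a composite into non-unit factors ever bottoms out in primes and units.

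Your $\gcd$ example is valid and in fact refutes the proposition as stated, not just your own proof strategy. On $\mathbb{N}$, $\gcd$ is commutative and associative; it has no units (a left-unit $u$ would need, for every $k$, some $b$ with $\gcd(u,b)=k$, forcing $k\mid u$ for all $k$); and every $n$ is a $\gcd$-composite since $n=\gcd(2n,3n)$ with both arguments non-units. Hence there are no $\gcd$-primes, so $F_f=\varnothing$, so $T_{AC}=\varnothing$, and no $t\in T_{AC}$ with $t=n$ can exist. So the gap you flagged --- the need for a descent or well-foundedness hypothesis forcing composites to split into strictly smaller (or at least eventually prime) non-unit factors --- is not a defect of your attempt but a genuine defect of the paper's proof and of the statement itself. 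Under such an added hypothesis both arguments go through: your induction closes, and the paper's normalization-by-rules-3-and-4 step becomes legitimate because every integer then actually has a representation in $F_f$ to normalize.
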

\begin{proof}By definition, all the $\oplus^{\mathbb{N}}_f$-primes and the $\oplus^{\mathbb{N}}_f$-units are in $T_{AC}$. Suppose there is a composite positive integer $c$ not in $T_{AC}$, then by rules 3 and 4 of the definition of $T_{AC}$ this means that if the composite is of the form $(y\oplus_{f}x)$ it is not in $T_{AC}$ because $(x\oplus_{f}y)$ is in $T_{AC}$ or  $((x\oplus_{f}y)\oplus_{f}z)$ is not in $T_{AC}$ because  $((x\oplus_{f}y)\oplus_{f}z)$ is in $T_{AC}$. Therefore we must have that
 $$c=(y\oplus_{f}x)\neq (x\oplus_{f}y)$$
 or
$$c=((x\oplus_{f}y)\oplus_{f}z)\neq ((x\oplus_{f}y)\oplus_{f}z)$$
But this is a contradiction with the commutativity or associativity of $\oplus^{\mathbb{N}}_f$.

\end{proof}

Now that we know each positive integer has a $\oplus_f$-representation in $T_{AC}$, we can state the following theorem.

\begin{theorem}Let $\oplus^{\mathbb{N}}_f$ be associative and commutative, then for all real numbers $s>1$ we have $L_{T_{AC}}(s)=\zeta(s)$ if
and only if for each $n\in\mathbb{N}$  there is exactly one element $c\in T_{AC}$ such that $n=c$ or, in other words, each positive integer has a unique $\oplus_f$-representation up to associativity and commutativity. \end{theorem}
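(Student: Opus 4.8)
The plan is to reduce the identity $L_{T_{AC}}(s)=\zeta(s)$ to a comparison of Dirichlet coefficients. By the rearrangement proposition we may write $L_{T_{AC}}(s)=\sum_{i=1}^{\infty} c_i/i^s$, where $c_i$ is the number of elements of $T_{AC}$ equal to $i$, while $\zeta(s)=\sum_{i=1}^{\infty} 1/i^s$. The essential preliminary observation is that, under the hypothesis that $\oplus^{\mathbb{N}}_f$ is associative and commutative, the preceding proposition guarantees that every positive integer $n$ equals at least one element of $T_{AC}$; hence $c_i \geq 1$ for every $i$. Consequently each coefficient of the difference series is non-negative, and this non-negativity is what makes the whole argument elementary.

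For the easy direction, suppose each positive integer has a unique $\oplus_f$-representation in $T_{AC}$, i.e. there is exactly one element of $T_{AC}$ equal to $n$. Then $c_i=1$ for all $i$, so $L_{T_{AC}}(s)=\sum_{i=1}^{\infty} 1/i^s=\zeta(s)$ for every $s>1$, where the series converges absolutely.

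For the converse, assume $L_{T_{AC}}(s)=\zeta(s)$ for all $s>1$, and fix any such $s$. Both series converge absolutely, so I may subtract them termwise to obtain $\sum_{i=1}^{\infty} (c_i-1)/i^s = 0$. Each summand is non-negative because $c_i\geq 1$, and a convergent series of non-negative terms can vanish only if every term vanishes; hence $c_i-1=0$, that is $c_i=1$, for every $i$. By the definition of $c_i$ this says exactly that each positive integer equals precisely one element of $T_{AC}$, which is the asserted uniqueness up to associativity and commutativity.

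The points requiring care are analytic bookkeeping rather than genuine difficulty: one must invoke the rearrangement proposition (valid since all terms $1/t^s$ are positive) to pass to the coefficient form, and absolute convergence for $s>1$ to justify the termwise subtraction. It is worth emphasizing where the potential obstacle lies and why it dissolves. In general, deducing equality of coefficients from equality of two Dirichlet series requires the uniqueness theorem for Dirichlet series. Here that machinery is unnecessary: the lower bound $c_i\geq 1$ supplied by associativity and commutativity forces the difference series to have non-negative coefficients, so a \emph{single} value $s>1$ already pins down every $c_i$ by the vanishing-of-non-negative-terms principle.
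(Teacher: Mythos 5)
Your proof is correct, and its skeleton matches the paper's: both directions pass through the rearrangement proposition to write $L_{T_{AC}}(s)=\sum_{i\geq 1} c_i/i^s$, both use the preceding proposition (existence of at least one representation in $T_{AC}$ under associativity and commutativity) to secure $c_i\geq 1$, and your easy direction is identical to the paper's. Where you differ is the concluding step of the converse. The paper, after setting $a_i=c_i-1\geq 0$ and $R(s)=\sum_{i\geq 1} a_i/i^s$, argues with two evaluation points: it takes $s_0>1$ with $R(s_0)=0$, notes that $R(s_0+1)=0$ as well, and derives a contradiction from the termwise strict inequality $a_i/i^{s_0} > a_i/(i^{s_0}i)$. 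You instead invoke, at a single $s>1$, the principle that a convergent series of non-negative terms vanishes only if every term vanishes. Your version is cleaner and slightly more robust: the paper's strict inequality as displayed is false for terms with $a_i=0$ and for $i=1$, so its two-point comparison tacitly needs the same ``some positive term must exist and then contributes positively'' repair that your one-line principle already encapsulates; and your closing remark --- that a single value of $s$ pins down all coefficients, so no uniqueness theorem for Dirichlet series is needed --- makes explicit a point the paper leaves implicit.
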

\begin{proof}($\Leftarrow$) We need to calculate $L_{T_{AC}}(s)$ and we know that $$L_{T_
{AC}f}(s)=\displaystyle\sum_{t\in T_{AC}} \frac{1}{t^s}=\displaystyle\sum_{i=1}^{\infty} \frac{c_{i}}{i^s}.$$ Since, by definition of the operation, we have that the codomain of $f$ is $\mathbb{N}$, thus we have that each $t$ equals a certain positive integer. Suppose that there is a $c_j>1$, then this means that there are $t_1,t_2\in T_{AC}$ such that $t_1=t_2=n$. But this is a contradiction since, by assumption, we have that each positive integer $k$ has exactly one element of $T_{AC}$ which is equal to $k$. Hence, for all $i$ we have $c_i=1$, and therefore, we find that $L_{T_{AC}}(s)=\displaystyle\sum_{i=1}^{\infty} \frac{1}{i^s}=\zeta(s)$.

($\Rightarrow$) Suppose there is a finite or infinite number of positive integers which are not uniquely represented up to associativity and commutativity, say $\{k_1,k_2,...\}$. Since $\oplus_f$ is associative and commutative, by the previous proposition, each of those integers are in $T_{AC}$ and are not uniquely represented. By the assumption $L_{T_{AC}}(s)=\zeta(s)$, we have that $$0=L_{T_{AC}}(s)-\zeta(s)=\displaystyle\sum_{i=1}^{\infty} \frac{c_{i}-1}{i^s}.$$ Hence, some of the components for which the $c_i$'s are equal to $1$ will cancel out and the components of integers not uniquely represented will remain. Let  $a_i=c_i-1\geq 0$, and define $R(s)$ as follow,  $$R(s)=\displaystyle\sum_{i=1}^{\infty} \frac{a_i}{i^s}$$

By assumption of the theorem, we must have that $R(s)=0$ for all real $s>1$, thus take $s_0>1$ a solution such that $R(s_0)=0$. We must also have that $s_0+1$ is such that $R(s_0+1)=0$, thus $$\displaystyle\sum_{i=1}^{\infty} \frac{a_i}{i^{s_0}i}=0.$$ Remark that each $\frac{a_i}{i^{s_0}}$ are non-negative numbers and that since $i$ is also a positive integer, we have that $$\frac{a_i}{i^{s_0}}>\frac{a_i}{i^{s_0}i}.$$ Hence we have $0=R(s_0+1)>R(s_0)=0$, which is a contradiction. Therefore each positive integer must have a unique $\oplus_f$-representation.  \end{proof}

One of the requirements of the theorem is that the operation is associative and commutative. Some examples of associative and commutative operations are $\oplus_{x+y+k}$ and $\oplus_{kxy}$. From these we can build other commutative and associative operations such as $\oplus_f$ and $\oplus_g$ where $f(u,v)=h\oplus_{kxy}u\oplus_{kxy}v$ and $g(u,v)=h\oplus_{x+y+k}u\oplus_{x+y+k}v$. The function $R(s)$ with $s>1$ can be seen as measuring how far a commutative and associative operation $f$ is from inducing unique factorization.

\subsection{Generalized Modulo Arithmetic}\label{fermatlittlesection}

The initial idea of investigating Modular Arithmetic in the setting of generalized primes was to find a general version of Fermat's little theorem so that we could build a factorization algorithm which would provide a useful way to solve Diophantine problems. This objective is not met in this article, but we hope that this working basis can be extended further in the future.

We will now construct a modulo arithmetic associated to an arbitrary operator. Recall that when we write $\oplus_f$ we mean $\oplus^{\mathbb{M}}_f$.

\begin{definition}We will say that $\oplus_{f_R^{-1}}$ is a \emph{right inverse binary operation} of the binary operation  $\oplus_f$ if $(a\oplus_f b)\oplus_{f_R^{-1}}b=a$. We will say that $\oplus_{f_L^{-1}}$ is a \emph{left inverse binary operation} of the binary operation  $\oplus_f$ if $a\oplus_{f_L^{-1}}(a\oplus_f b)=b$.\end{definition}

For example, subtraction is the right inverse of addition and division is a right inverse of multiplication. Furthermore, using our hyperoperation notation $\oplus_2$ for exponentiation, the logarithm is the right inverse of exponentiation since $$(a\oplus_2 b)\oplus_{\log_{y}x}b=(b^a)\oplus_{\log_{y}x}b=\log_{b}b^a=a.$$  For subtraction, $\oplus_{f_R^{-1}}$ is denoted by the symbol `$-$'. We remark that we can write the left inverse of addition by using the right inverse, since $ a\oplus_{y-x}(a+b)=b$.

\begin{definition}Let $\oplus_f$ be a binary operation and  $\oplus_g$ be a binary operation which has a right inverse binary operation $\oplus_{g^{-1}}$. We write $c\equiv_{g} b\,\,(\mathrm{mod}_{f}\,\,m)$ if and only if $m$ is a $\oplus_{f}$-factor of $c\oplus_{g^{-1}}b$.\end{definition}

A trivial example is when $g$ is addition, $f$ is the usual multiplication (i.e. f(x,y)=xy) and $g^{-1}$ is subtraction. In the case where $g$ is the usual addition we will sometimes write `$\equiv$' instead of `$\equiv_{x+y}$' and in the case where $f$ is the usual multiplication we will sometimes write  `$\mathrm{mod}$' instead of `$\mathrm{mod_{xy}}$'.

Finding a Fermat's little theorem associated to an arbitrary operation $\oplus_f$ can be quite challenging since useful properties, such as addition and multiplication respecting modulo relations and the cancellation law, will not occur in the case of some operators. If $c\equiv_{g} b\,\,(\mathrm{mod}_{f}\,\,m)$ and $c'\equiv_{g} b'\,\,(\mathrm{mod}_{f}\,\,m)$ then what are the conditions on the $f$ and $g$ such that we find $c\oplus_{g}c'\equiv_{g} b\oplus_{g}b'\,\,(\mathrm{mod}_{f}\,\,m)$ or $c\oplus_{f}c'\equiv_{g}b\oplus_{f}b'\,\,(\mathrm{mod}_{f}\,\,m)$?

We will now give a simple extension of Fermat's little theorem.

 \begin{definition}We will write $a^{\oplus^{\leftarrow}_f p}$ instead of $(a \oplus_f (a\oplus_f (...\oplus_f(a\oplus_f a)...)))$ in which $a$ occurs exactly $p$ times.\end{definition}

\begin{theorem}Let $\mathbb{M}=\mathbb{Z}$. If $p$ is a $\oplus_{xy}$-prime (i.e.,
 usual prime) and $\gcd(k,p)=1$, then $$a^{\oplus^{\leftarrow}_{kxy} p}\oplus_{kxy} 1\equiv a\oplus_{kxy}1 \,\, (\mathrm{mod}_{\oplus_{kxy}}\,\, p).$$\end{theorem}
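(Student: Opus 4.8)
The plan is to strip away the three layers of notation---the right-nested power $a^{\oplus^{\leftarrow}_{kxy} p}$, the generalized congruence, and the factor relation---until the claim collapses to an ordinary divisibility statement that follows from the classical Fermat's little theorem.

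First I would evaluate the nested power explicitly. Writing $a \oplus_{kxy} b = kab$ and setting $b_n = a^{\oplus^{\leftarrow}_{kxy} n}$, the recursion $b_n = a \oplus_{kxy} b_{n-1} = k\,a\,b_{n-1}$ with base case $b_1 = a$ gives, by a one-line induction, $b_n = k^{n-1} a^n$. Hence $a^{\oplus^{\leftarrow}_{kxy} p} = k^{p-1} a^p$, so that $a^{\oplus^{\leftarrow}_{kxy} p} \oplus_{kxy} 1 = k \cdot (k^{p-1}a^p)\cdot 1 = k^p a^p$, while $a \oplus_{kxy} 1 = ka$. The congruence to be proved is therefore $k^p a^p \equiv ka\ (\mathrm{mod}_{\oplus_{kxy}}\, p)$.

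Next I would unwind that congruence. Since the unlabelled $\equiv$ abbreviates $\equiv_{x+y}$, here $g$ is addition and its right inverse $g^{-1}$ is subtraction, so by the definition of the generalized modulo the assertion $c \equiv b\ (\mathrm{mod}_{\oplus_{kxy}}\, p)$ is exactly the statement that $p$ is a $\oplus_{kxy}$-factor of $c - b$. Thus everything reduces to showing that $p$ is a $\oplus_{kxy}$-factor of $N = k^p a^p - ka$. I would then reduce \emph{this} to a plain divisibility: if $kp \mid N$, then setting $x = N/(kp) \in \mathbb{Z}$ we have $N = kxp = x \oplus_{kxy} p$, a $\oplus_{kxy}$-representation of $N$ in which $p$ occurs, so $p$ is a factor of $N$ in the paper's sense.

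It remains to prove $kp \mid N$, and this is the only place Fermat's little theorem enters. Factoring $N = k(k^{p-1}a^p - a)$ gives $k \mid N$ at once. For the prime $p$, Fermat's little theorem yields $k^p \equiv k$ and $a^p \equiv a\ (\mathrm{mod}\ p)$, whence $N = k^p a^p - ka \equiv ka - ka \equiv 0\ (\mathrm{mod}\ p)$, so $p \mid N$; the hypothesis $\gcd(k,p)=1$ then combines the two divisibilities into $kp \mid N$, finishing the proof. The computations are all routine, so the only genuinely delicate point is the bookkeeping of the nested definitions---in particular verifying that mere divisibility $kp \mid N$ really does certify $p$ as a factor under the occurrence-based definition, and keeping track of which operation plays the role of $g$ versus $f$ in the modulo relation.
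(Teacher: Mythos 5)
Your proposal is correct and follows essentially the same route as the paper's proof: both compute $a^{\oplus^{\leftarrow}_{kxy} p}=k^{p-1}a^{p}$ by induction, reduce the generalized congruence to showing that $p$ is a $\oplus_{kxy}$-factor of $k^{p}a^{p}-ka$, invoke the classical Fermat's little theorem together with $\gcd(k,p)=1$ to write that difference as $\alpha\oplus_{kxy}p=k\alpha p$, and conclude. The only cosmetic difference is that you use Fermat's little theorem in the form $k^{p}\equiv k$, $a^{p}\equiv a \pmod{p}$ and combine coprime divisibilities into $kp\mid N$, whereas the paper uses $k^{p-1}\equiv 1$, $a^{p}\equiv a$ and divides the factor $k$ out first; these are the same argument.
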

\begin{proof}

By induction, we can show that $a^{\oplus^{\leftarrow}_{kxy} p}=k^{p-1}a^{p}$. Thus, we have that $$a^{\oplus^{\leftarrow}_{kxy} p}\oplus_{kxy} 1=k^{p-1}a^{p}\oplus_{kxy} 1=kk^{p-1}a^{p}.$$ Since $p$ is a usual prime, by Fermat's little theorem, we have $k^{p-1}\equiv 1 \,\,(\mathrm{mod}\,\, p)$ and
$a^{p}\equiv a \,\,(\mathrm{mod}\,\, p)$. Hence, by the modulo multiplication property, we find $k^{p-1}\cdot a^{p}\equiv a\cdot 1\,\,(\mathrm{mod}\,\, p)$. We can multiply each side by $k$, thus
$kk^{p-1}a^{p}\equiv ka\,\,(\mathrm{mod}\,\, p)$. Since $\gcd(k,p)=1$, $p$ is prime and $kk^{p-1}a^{p}-ka\equiv 0\,\,(\mathrm{mod}\,\, p)$, we can write $kk^{p-1}a^{p}-ka=k[k^{p-1}a^{p}-a]=k\alpha p$ for some $\alpha\in\mathbb{Z}$. Since
$k\alpha p=\alpha\oplus_{kxy} p$ then we have that $p$ is a $\oplus_{kxy}$-factor of $kk^{p-1}a^{p}-ka$, hence we have that  $k^{p-1}a^{p}\equiv ka \,\, (\mathrm{mod}_{\oplus_{kxy}}p)$. Since $kk^{p-1}a^{p}=a^{\oplus^{\leftarrow}_{kxy} p}\oplus_{kxy} 1$ and $ka=a\oplus_{kxy} 1$ we find that $a^{\oplus^{\leftarrow}_{kxy} p}\oplus_{kxy} 1\equiv a\oplus_{kxy} 1 (\mathrm{mod}_{\oplus_{kxy}}p)$.
\end{proof}

We remark that by taking $k=1$, the above theorem is reduced to the statement of Fermat's little theorem.

In the following theorem we consider a statement where the index of `$\mathrm{mod}$' is the usual multiplication, but where the modulo equation resemble Fermat's little theorem.

\begin{theorem}Let $u,v\in\mathbb{Z}$ be such that $\gcd(p,v)=1$ and $u=h(v-1)$ for $h\in\mathbb{Z}$, if $p$ is a usual prime, then for all $k\in\mathbb{Z}$ we have that $k^{\oplus^{\leftarrow}_{ux+vy} p}\equiv k \,\,(\mathrm{mod}\,\, p)$.\end{theorem}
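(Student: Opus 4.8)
The plan is to reduce the left-nested iterate $k^{\oplus^{\leftarrow}_{ux+vy} p}$ to a closed-form expression in $k$, $v$ and $h$, and then to recognise the desired congruence as a direct consequence of the ordinary Fermat's little theorem applied to $v$. Writing the operation explicitly, $a \oplus_{ux+vy} b = ua + vb$, and setting $a_n = k^{\oplus^{\leftarrow}_{ux+vy} n}$ (so that $a_1 = k$), the left-associated nesting gives the first-order linear recurrence $a_n = k \oplus_{ux+vy} a_{n-1} = uk + v\,a_{n-1}$. This is the structural observation that makes the whole statement tractable.

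Next I would solve this recurrence, where substituting the hypothesis $u = h(v-1)$ collapses the inhomogeneous term. I claim the closed form is $a_n = k\bigl[(1+h)v^{n-1} - h\bigr]$, which I would verify by induction on $n$: the base case $n=1$ gives $k[(1+h)-h]=k$, and the inductive step is the direct substitution $uk + v\,a_{n-1} = k[hv - h] + k[(1+h)v^{n-1} - hv] = k[(1+h)v^{n-1}-h]$. Writing the formula in this form, rather than the partial-fraction form carrying a $1-v$ denominator, has the advantage of remaining valid even when $v=1$ (where $u=0$ and the iterate is constantly $k$), so no separate case analysis is required.

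With the closed form in hand, specialising to $n=p$ gives $a_p - k = k\bigl[(1+h)v^{p-1} - h\bigr] - k = k(1+h)(v^{p-1}-1)$. Since $p$ is prime and $\gcd(v,p)=1$, Fermat's little theorem yields $v^{p-1} \equiv 1 \pmod p$, hence $p \mid (v^{p-1}-1)$ and therefore $p \mid (a_p - k)$. Unwinding the modulo notation of the earlier definition, $p$ being a $\oplus_{xy}$-factor of $a_p - k$ is precisely the assertion $k^{\oplus^{\leftarrow}_{ux+vy} p} \equiv k \pmod p$.

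The main obstacle is not the algebra, which is routine once the recurrence is set up, but rather spotting the correct closed form and seeing why the hypothesis $u = h(v-1)$ is the precise condition needed: it is exactly what forces the constant part of the iterate to cancel against $k$ so that the difference $a_p - k$ factors through $v^{p-1}-1$, the quantity that Fermat controls. Without the divisibility constraint relating $u$ and $v$, the leftover term $k\bigl[(1+h)v^{p-1}-h-1\bigr]$ would not reduce to a multiple of $v^{p-1}-1$, and the reduction to Fermat's little theorem would break down.
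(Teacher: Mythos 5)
Your proof is correct, and it follows the same skeleton as the paper's: obtain a closed form for the left-nested iterate, use $u=h(v-1)$ to factor the difference $a_p-k$ as $k(1+h)(v^{p-1}-1)$, and finish with Fermat's little theorem applied to $v$. The difference is in how the closed form is reached. The paper first proves (by induction) the hypothesis-free expansion $k^{\oplus^{\leftarrow}_{ux+vy} p}=uk+vuk+v^2uk+\dots+v^{p-2}uk+v^{p-1}k$, then applies the geometric series formula to get $k\left(\frac{u(v^{p-1}-1)}{v-1}+v^{p-1}\right)$, and only then substitutes $u=h(v-1)$; you instead fold the hypothesis into the induction from the start and verify the denominator-free form $a_n=k\bigl[(1+h)v^{n-1}-h\bigr]$ directly. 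The paper's route isolates a general formula for the iterate valid for arbitrary $u,v$, which is potentially reusable; your route buys a genuine (if small) gain in rigor: the paper's geometric-series step divides by $v-1$ and is therefore ill-defined in the admissible case $v=1$ (where $u=0$ and the claim is trivial), a degenerate case your closed form handles uniformly with no case split. Both arguments then conclude identically.
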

\begin{proof}By induction we can prove that $$k^{\oplus^{\leftarrow}_{ux+vy} p}= uk+vuk+v^2uk+...+v^{p-2}uk+v^{p-1}k.$$ We can rewrite the right side as $$k(u(1+v+v^2+...+v^{p-2})+v^{p-1})$$ and by using the geometric
series formula we find $$k^{\oplus^{\leftarrow}_{ux+vy} p}=\displaystyle k\left(\frac{u(v^{p-1}-1)}{(v-1)}+v^{p-1}\right).$$ Hence, by subtracting by $k$ on each side, we have $$k^{\oplus^{\leftarrow}_{ux+vy} p}-k=\displaystyle k\left(\frac{u(v^{p-1}-1)}{(v-1)}+v^{p-1}-1\right).$$
Since $u=h(v-1)$ we find $$k^{\oplus^{\leftarrow}_{ux+vy} p}-k=k(h(v^{p-1}-1)+v^{p-1}-1)=k(v^{p-1}-1)(h+1).$$ Since $\gcd(p,v)=1$ we have by the Fermat's little theorem that $v^{p-1}-1$ is divisible by $p$. Hence we conclude that $k^{\oplus^{\leftarrow}_{ux+vy} p}\equiv k \,\,(\mathrm{mod}\,\, p)$. \end{proof}

Euclid's lemma (i.e.: If $p$ is prime and $p$ divides $ab$, then $p$ divides $a$ or $p$ divides b.) is classically used to prove the cancelation law and the uniqueness of factorization. Some interesting investigations would be to find the connections between a general Euclid's lemma and the uniqueness of factorization.

\section{Language of the Generalized Primes}\label{generalsection}

In this section we will see that the generalized primes can act as an interesting and different language for addressing Diophantine problems, Goldbach type problems and algebraic fields. We will see that this might also lead to the interpretation of certain problems in the general perspective of category theory.

\subsection{Primes Associated to Multivariate Operations}\label{multi}

\begin{definition}We will say that $f$ is an \emph{n-ary operation over $\mathbb{M}$} \\if $f:\mathbb{M}^n\rightarrow\mathbb{M}$ is a n-ary partial function. \end{definition}

\begin{definition}Let
$f$ be a n-ary operation, we
define the set $C$ of all \emph{$f$-combinations} and \emph{occurrences} as:
\begin{itemize}
\item[1.] If $x\in\mathbb{M}$ then $x\in C$ and we say that $x$ is an occurrence in $x$,

\item[2.] If $x_i\in C$ then $f(x_1,x_2,...,x_n)\in C$ and we say that $x_1,x_2,...,x_n$ and $f(x_1,x_2,...,x_n)$ are occurrences in  $f(x_1,x_2,...,x_n)$ and for all $i$ the occurrences in $x_i$ are also occurrences in $f(x_1,x_2,...,x_n)$.
\end{itemize}
\end{definition}

\begin{definition}We say that $c\in C$ is an
$f$-representation of $m\in\mathbb{M}$ if
$m=c$.\end{definition}

\begin{definition}Let $m\in \mathbb{M}$, we say that $d$ is a $f$-factor of $m$ if and only if $d$ is an occurrence in a representation of $m$.\end{definition}

\begin{definition}We say that
 $u\in\mathbb{M}$ is a $[f,j]$-unit if for all $s\in \mathbb{M}$ there are some $b_j\in\mathbb{M}$ such that
 $$s=f(b_{1},b_{2},...,b_{j-1},u,b_{j+1},...,b_{n}).$$
\noindent Sometimes, we will say that it is an $f$-unit if it is an  $[f,j]$-unit for any $j$.
\end{definition}

\begin{definition}$m\in\mathbb{M}$ is an $f$-composite if
and only if it is not a $[f,j]$-unit for all $j$ and there exists some $c_k\in\mathbb{M}$ such that each $c_k$ is not a $[f,j]$-unit and $f(c_1,c_2,...,c_n)$ is an $f$-factor of $m$.\end{definition}

\begin{definition}$m\in\mathbb{M}$ is an
$f$-prime if and only if for all $j$ it is not an $[f,j]$-unit and it is not an $f$-composite.\end{definition}

We now want to generalize proposition \ref{dio-prime-correspondance} where we established a correspondence between solutions of Diophantine equations and composites. As before, we need to define what is a trivial solution.

\begin{definition}Let $f: \mathbb{M}^n\rightarrow \mathbb{M}$ and
$g: \mathbb{M}^m\rightarrow \mathbb{M}$.

\begin{itemize}
\item[1.]We say that
$$(x'_1,x'_2,...,x'_n, y'_1,y'_2,...,y'_m)$$ is a solution over $\mathbb{M}$ of the equation $f(x_1,x_2,...,x_n)=g(y_1,y_2,...,y_m)$ if  $$f(x'_1,x'_2,...,x'_n)=g(y'_1,y'_2,...,y'_m),$$ with $x'_i,y'_i\in\mathbb{M}$ for all $i$.

\item[2.] We say that the solution is trivial in $x_i$ if for all $c\in\mathbb{M}$ there are some indexed $b$ in $\mathbb{M}$ such that $$c=f(b_1,b_{2},...,b_{i-1},x'_i,b_{i+1},...,b_{n}).$$

    Similarly, We say that the solution is trivial in $y_i$ if for all $c\in\mathbb{M}$ there are some indexed $b$ in $\mathbb{M}$ such that $$c=g(b_1,b_{2},...,b_{i-1},y'_i,b_{i+1},...,b_{m}).$$

    Every solution which is nowhere trivial is said to be \emph{non-trivial}.
\end{itemize}
\end{definition}

We now have a generalization of theorem \ref{dio}.

\begin{theorem}\label{diogeneral}Let $f: \mathbb{M}^m\rightarrow \mathbb{M}$ and
$g: \mathbb{M}^m\rightarrow \mathbb{M}$. Let $C_{f}$ and $C_{g}$ be the sets of $f$-composite and $g$-composite respectively. Then, $C_{f}\cap C_{g}\neq \varnothing$ if and only if the equation $f(x_1,x_2,...,x_m)=g(y_1,y_2,...,y_n)$ has a non-trivial solution over $\mathbb{M}$.
\end{theorem}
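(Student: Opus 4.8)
The plan is to prove the statement as a coordinate-wise restatement of Theorem \ref{dio}, by unwinding the definitions of $f$-composite, $g$-composite and non-trivial solution and observing that they describe one and the same object from two points of view. Throughout, the bridge between the two sides is a single element $w\in\mathbb{M}$ that is simultaneously a value of $f$ on non-unit inputs and a value of $g$ on non-unit inputs. Reading the composite definition slot by slot, $m$ is an $f$-composite exactly when $m$ is not an $f$-unit and $m=f(c_1,\dots,c_m)$ for some $c_i$ with each $c_i$ not a $[f,i]$-unit; and a solution is non-trivial precisely when no $x_i'$ is a $[f,i]$-unit and no $y_j'$ is a $[g,j]$-unit. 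These two phrasings of ``no coordinate is a unit in its own slot'' are what make the equivalence go through.

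For the forward direction I would take $w\in C_f\cap C_g$. Since $w$ is an $f$-composite there are inputs $a_1,\dots,a_m$, none of which is a $[f,i]$-unit in its slot, with $w=f(a_1,\dots,a_m)$; since $w$ is a $g$-composite there are inputs $b_1,\dots,b_n$, none a $[g,j]$-unit, with $w=g(b_1,\dots,b_n)$. Equating the two values gives $f(a_1,\dots,a_m)=g(b_1,\dots,b_n)$, so the tuple $(a_1,\dots,a_m,b_1,\dots,b_n)$ is a solution of the equation. By construction no $a_i$ is a $[f,i]$-unit and no $b_j$ is a $[g,j]$-unit, which is exactly the statement that the solution is trivial in no $x_i$ and in no $y_j$; hence it is non-trivial. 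For the converse I would run the same identification backwards: given a non-trivial solution $(a_1,\dots,a_m,b_1,\dots,b_n)$, set $w=f(a_1,\dots,a_m)=g(b_1,\dots,b_n)$. Non-triviality says that no $a_i$ is a $[f,i]$-unit and no $b_j$ is a $[g,j]$-unit, so $f(a_1,\dots,a_m)$ is an $f$-factor of $w$ witnessed by non-unit inputs, and likewise $g(b_1,\dots,b_n)$ is a $g$-factor of $w$. Thus $w$ is both an $f$-composite and a $g$-composite, i.e.\ $w\in C_f\cap C_g$, so the intersection is nonempty.

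The step that needs the most care—and the one the analogous proof of Theorem \ref{dio} passes over—is the clause in the composite definition requiring that $w$ itself not be an $f$-unit (resp.\ a $g$-unit). Non-triviality controls only the coordinates $a_i$ and $b_j$ in their own slots; it says nothing about $w$ as a potential universal filler for some slot of $f$ or $g$, so a priori $w$ could be a $[f,k]$-unit even though $w=f(\text{non-units})$. I expect the resolution to be either to read ``not a unit'' into the hypotheses in the spirit of Theorem \ref{dio}, or to check it directly from the concrete operation at hand. A secondary, related point is that I am reading ``$f(c_1,\dots,c_m)$ is an $f$-factor of $w$'' in its direct form $f(c_1,\dots,c_m)=w$, which is the reading already used in Theorem \ref{dio} and the only one that forces the two witnesses $f(a_1,\dots,a_m)$ and $g(b_1,\dots,b_n)$ to coincide at the common value $w$.
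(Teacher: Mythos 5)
Your proof is correct relative to the paper and takes essentially the same route as the paper's own proof: both directions are the direct definitional unwinding, matching the non-unit witnesses of compositeness with the coordinates of a non-trivial solution, under the same direct reading $f(c_1,\dots,c_m)=w$ of ``factor'' that the paper itself uses in Theorem \ref{dio} and here.

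The one substantive remark is that the gap you flag in the converse direction is genuine, and it is a gap in the paper's proof as well, not a defect of your write-up relative to it: the paper passes from ``the solution is nowhere trivial'' straight to ``$f(a_1,\dots,a_m)$ is $f$-composite and $g$-composite,'' never checking the clause of the composite definition that requires the value itself not to be a unit. Non-triviality constrains only the coordinates, so the common value can perfectly well be a unit, and then it lies in neither $C_f$ nor $C_g$. A concrete counterexample: over $\mathbb{M}=\mathbb{N}$ let $f(1,y)=y$, $f(2,2)=1$, $f(x,y)=2$ otherwise, and $g(1,y)=y$, $g(2,2)=1$, $g(x,y)=3$ otherwise. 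Then the only $f$-unit and only $g$-unit is $1$, the tuple $(2,2,2,2)$ is a non-trivial solution of $f(x_1,x_2)=g(y_1,y_2)$ since $f(2,2)=1=g(2,2)$ and $2$ is a unit in no slot, yet $C_f=\{2\}$ and $C_g=\{3\}$, so $C_f\cap C_g=\varnothing$. Hence the theorem as stated needs a repair --- either strengthen ``non-trivial solution'' to also require that the common value not be an $f$-unit or a $g$-unit, or add that as a hypothesis --- which is exactly the fix you anticipate when you propose reading ``not a unit'' into the hypotheses; in that respect your treatment is more careful than the paper's.
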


\begin{proof}($\Rightarrow$)Take $c\in C_{f}\cap C_{g}$, then, since $c$ is $\oplus_f$-composites and $\oplus_g$-composites, we have $$f(a_1,a_2,...,a_m)=c=g(b_1,b_2,...,b_n)$$ where each $a_i\in\mathbb{M}$ is not a $[f,i]$-unit and each $b_i\in\mathbb{M}$ is not a $[g,i]$-unit  which means that  $$(a_1,a_2,...,a_m,b_1,b_2,...,b_n)$$ is a solution which is nowhere trivial.

($\Leftarrow$) Conversely, if $$f(a_1,a_2,...,a_m)=g(b_1,b_2,...,b_n)$$ such that the solution over $\mathbb{M}$ is nowhere trivial hence we have that $f(a_1,a_2,...,a_m)$ is $f$-composite and $g$-composite, thus we have that  $f(a_1,a_2,...,a_m)\in C_{f}\cap C_{g}$ and therefore, $C_{f}\cap C_{g}\neq \varnothing$.
\end{proof}

Another way to look at this is that an equation does not have a non-trivial solution if the sets of primes satisfies a related condition.

\begin{theorem}\label{diogeneral2}Let $f: \mathbb{M}^m\rightarrow \mathbb{M}$ and
$g: \mathbb{M}^m\rightarrow \mathbb{M}$. Let $P_{f},P_{g},U_{f}$ and $U_{g}$ be respectively the sets of $f$-primes, $g$-primes, $f$-units and $g$-units. Then, \\ $P_{f}\cup P_{g}\cup U_{f}\cup U_{g}= \mathbb{M}$ if and only if the equation $f(x_1,x_2,...,x_m)=g(y_1,y_2,...,y_n)$ has no non-trivial solution over $\mathbb{M}$.
\end{theorem}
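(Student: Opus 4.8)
The plan is to derive this theorem directly from Theorem \ref{diogeneral} by taking the contrapositive and using the partition property established earlier in the excerpt. Recall that for any $n$-ary operation $f$, every element of $\mathbb{M}$ is exactly one of: an $f$-unit, an $f$-composite, or an $f$-prime (this is the $n$-ary analogue of the partition remark made after the definition of generalized primes). Thus for each of $f$ and $g$ we have a disjoint decomposition $\mathbb{M}=P_f\sqcup C_f\sqcup U_f$ and $\mathbb{M}=P_g\sqcup C_g\sqcup U_g$, where $C_f,C_g$ are the composite sets.

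The first step is to rewrite the set-theoretic hypothesis $P_f\cup P_g\cup U_f\cup U_g=\mathbb{M}$ in terms of composites. Since $\mathbb{M}=P_f\cup C_f\cup U_f$, an element fails to lie in $P_f\cup U_f$ precisely when it lies in $C_f$ (using that the three classes are mutually exclusive), and likewise for $g$. The plan is to show that $P_f\cup P_g\cup U_f\cup U_g=\mathbb{M}$ holds if and only if no element lies in both $C_f$ and $C_g$, i.e. if and only if $C_f\cap C_g=\varnothing$. Concretely, if some $c\in C_f\cap C_g$, then $c\notin P_f\cup U_f$ and $c\notin P_g\cup U_g$, so $c\notin P_f\cup P_g\cup U_f\cup U_g$, contradicting the hypothesis; conversely, if $C_f\cap C_g=\varnothing$, then every $m\in\mathbb{M}$ is a non-composite for at least one of $f,g$, hence lies in $(P_f\cup U_f)\cup(P_g\cup U_g)=P_f\cup P_g\cup U_f\cup U_g$.

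The second step is simply to invoke Theorem \ref{diogeneral}, which states that $C_f\cap C_g\neq\varnothing$ if and only if the equation $f(x_1,\dots,x_m)=g(y_1,\dots,y_n)$ has a non-trivial solution over $\mathbb{M}$. Negating both sides, $C_f\cap C_g=\varnothing$ is equivalent to the equation having no non-trivial solution. Combining this with the equivalence from the first step yields the theorem.

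The only genuinely delicate point, and the step I would treat most carefully, is the first one: the claim that ``not in $P_f\cup U_f$'' is the same as ``in $C_f$.'' This relies squarely on the fact that units, composites, and primes partition $\mathbb{M}$, which in turn depends on the definitions of $[f,j]$-unit, $f$-composite, and $f$-prime being mutually exclusive and exhaustive. I would verify that these $n$-ary definitions indeed partition $\mathbb{M}$ exactly as in the binary case (an $f$-prime is by definition a non-unit non-composite, and a composite is by definition a non-unit, so the three classes are disjoint and cover $\mathbb{M}$). Once this partition is confirmed, the rest of the argument is a short Boolean manipulation together with the contrapositive of Theorem \ref{diogeneral}.
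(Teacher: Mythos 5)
Your proposal is correct and takes essentially the same approach as the paper: both arguments reduce the statement to Theorem \ref{diogeneral} by using the fact that units, composites and primes partition $\mathbb{M}$, so that failing to lie in $P_f\cup U_f$ (resp.\ $P_g\cup U_g$) is the same as lying in $C_f$ (resp.\ $C_g$). The only difference is presentational: you organize this as the set identity $P_f\cup P_g\cup U_f\cup U_g=\mathbb{M}\setminus(C_f\cap C_g)$ followed by the contrapositive of Theorem \ref{diogeneral}, while the paper runs the same logic as two short proofs by contradiction.
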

\begin{proof}($\Rightarrow$) Suppose the equation has a non-trivial solution over $\mathbb{M}$, then by theorem \ref{diogeneral}, there is a composite number $c\in C_{f}\cap C_{g}$. This means that $c$ cannot be an  $f$-prime or a $g$-prime, hence we find that $P_{f}\cup P_{g}\cup U_{f}\cup U_{g}\neq \mathbb{M}$, a contradiction.

($\Leftarrow$) Conversely, suppose there exists some $c\in\mathbb{M}$ such that $c\notin P_{f}\cup P_{g}\cup U_{f}\cup U_{g}$, then this $c$ must be an $f$-composite and a $g$-composite. By the previous theorem the equation has a non-trivial solution over $\mathbb{M}$, a contradiction. \end{proof}

In general, for we will denote the set of primes, units and composites as follows.
\begin{definition}For $f: \mathbb{M}^m\rightarrow \mathbb{M}$, we denote the set of all $f$-primes by $P^{\mathbb{M}}_f$, the set of  $f$-composites by $C^{\mathbb{M}}_f$ and the set of all $f$-units by $U^{\mathbb{M}}_f$ \end{definition}

Using theorems \ref{diogeneral}, we can rephrase Fermat's last theorem as :
$$\mathrm{If}\,\,\,\, n>2,\,\,\,\, \mathrm{then} \,\,\,\,C^{\mathbb{Z}}_{x^n+y^n}\cap C^{\mathbb{Z}}_{z^n}=\varnothing$$

Since, for $n>2$ there are no trivial solutions of $x^n+y^n$ and of $z^n$ in the sense that we defined, thus we have that $U^{\mathbb{Z}}_{x^n+y^n}=U^{\mathbb{Z}}_{z^n}=\varnothing$. Thus, by theorem \ref{diogeneral2}, we can also rephrase Fermat's last theorem as :
$$\mathrm{If}\,\,\,\, n>2,\,\,\,\, \mathrm{then} \,\,\,\,P^{\mathbb{Z}}_{x^n+y^n}\cup P^{\mathbb{Z}}_{z^n}=\mathbb{Z}$$

To be more precise, we would need to indicate that the function $x^n+y^n$ is a binary function and that $z^n$ is a unary function. We can do this by writing $C^{\mathbb{Z}}_{[2,x_1^n+x_2^n]}$ instead of $C^{\mathbb{Z}}_{x^n+y^n}$ and by writing  instead of $C^{\mathbb{Z}}_{[1,y_1^n]}$. In general, we will not write the arity of the function when it is the same as the number of variables or when it is clear by context.

Now, Fermat's last theorem is stated as a problem about generalized primes. It is interesting to see that Diophantine problems can be written as problems about generalized primes and composites. A possible outcome of this is to solve Diophantine equations by using tools of the theory of prime numbers such as extensions of Fermat's little theorem or the tools surrounding Dirichlet series.

\subsection{Goldbach Problems}

Let $f$ be a unary function over the positive integers such that $f(x)=2n$ with $n$ a fixed positive integer, then we have that $C^{\mathbb{N}}_{2n}=\{2n\}$ since the only positive integer which can be written as $2n$ is $2n$ itself. Using the same notation as in the previous section, we can write the Goldbach conjecture as:
$$\mathrm{If}\,\,\,\, n>2,\,\,\,\, \mathrm{then} \,\,\,\,C^{P^{\mathbb{N}}_{xy}}_{x_1+x_2}\cap C^{\mathbb{N}}_{2n}\neq\varnothing$$

Let $\mathbb{N}_0$ be the set of non-negative numbers, then another problem which can be written as above is the Lagrange four square theorem:

$$\mathrm{If}\,\,\,\, n\geq 1,\,\,\,\, \mathrm{then} \,\,\,\,C^{\mathbb{N}_0}_{x_1^2+x_2^2+x_3^2+x_4^2}\cap C_{n}\neq\varnothing.$$

By extending the concept of primes, we now have many new questions arising. Some of those questions take the format of the Goldbach conjecture question. For example, the $\oplus^{\mathbb{N}}_{x^2+y^2}$-primes are
$$\{1,3,4,6,7,9,11,12,14,15,19,21,22,23,24,27,...\}.$$ By inspection, we can conjecture the following:

\begin{conjecture}Each integer higher or equal to $4$ can be written
as the sum of two $\oplus_{x^2+y^2}$-primes.\end{conjecture}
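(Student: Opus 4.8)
The plan is to first strip away the hyperoperation language and reduce the conjecture to an elementary statement, and then settle that statement by a counting (pigeonhole) argument. \emph{Step one: identify the primes.} Over $\mathbb{N}=\{1,2,3,\dots\}$ one has $a\oplus^{\mathbb{N}}_{x^2+y^2}b=a^2+b^2\ge 2$, so the value $1$ is never attained and consequently there is no $\oplus_{x^2+y^2}$-left-unit and no $\oplus_{x^2+y^2}$-right-unit. By the trichotomy noted after the definition of primes, every $m\in\mathbb{N}$ is therefore either a $\oplus_{x^2+y^2}$-prime or a $\oplus_{x^2+y^2}$-composite. Now I would unwind the definitions of factor and composite: a representation of $m$ is either the bare symbol $m$, which contains no $\oplus_f$-occurrence, or a combination of the shape $(\alpha\oplus_f\beta)$ at top level, forcing $m=\alpha^2+\beta^2$ with $\alpha,\beta\ge 1$ and exhibiting $(\alpha\oplus_f\beta)$ itself as the required occurrence. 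Since no element is a unit, the unit clauses in the composite definition are vacuous, and one concludes that $m$ is $\oplus_{x^2+y^2}$-composite iff $m=a^2+b^2$ for some $a,b\ge 1$. Hence $m$ is a $\oplus_{x^2+y^2}$-prime iff $m$ is \emph{not} a sum of two positive squares, and the conjecture becomes: every integer $N\ge 4$ can be written as $N=p+q$ where neither $p$ nor $q$ is a sum of two positive squares.

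\emph{Step two: the density input.} Let $T$ be the set of sums of two positive squares and $B(N)=|T\cap[1,N]|$. The engine of the proof is that $T$ has natural density zero, i.e. $B(N)/N\to 0$. I would prove this elementarily: every sum of two squares has each prime $\equiv 3\pmod 4$ to an even power, so for any finite set of primes $p_1,\dots,p_k$ with $p_i\equiv 3\pmod 4$ the set $T$ is contained in $\{\,n:\ v_{p_i}(n)\ \text{is even for all}\ i\le k\,\}$, whose density equals $\prod_{i\le k}\frac{p_i}{p_i+1}$. Because $\sum_{p\equiv 3\ (\mathrm{mod}\ 4)}1/p$ diverges, this product tends to $0$ as $k\to\infty$, so the upper density of $T$ is $0$; in particular $B(N)/N\to 0$.

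\emph{Step three: pigeonhole.} Fix $N\ge 4$ and call $a\in\{1,\dots,N-1\}$ \emph{bad} if $a\in T$ or $N-a\in T$; any good $a$ yields the decomposition $N=a+(N-a)$ into two $\oplus_{x^2+y^2}$-primes. The number of bad $a$ is at most $2B(N)$, so a good $a$ exists whenever $2B(N)<N-1$. By the density statement this inequality holds for all $N\ge N_0$, and the finitely many $4\le N<N_0$ are dispatched by direct verification, aided by the facts that $1,3,4$ are $\oplus_{x^2+y^2}$-primes and that every $n\equiv 3\pmod 4$ is one.

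\textbf{Main obstacle.} The delicate point is the density estimate, not the bookkeeping. A crude lattice-point count only gives $B(N)\le \pi N/4$, which is $<N$ but useless here, since the pigeonhole needs the stronger $B(N)<(N-1)/2$; only a genuine $o(N)$ bound closes the gap. Turning the qualitative density-zero statement into an explicit threshold $N_0$, so that the finite check is genuinely finite and verifiable, is the part that requires real care, whereas the reduction in Step one, once the absence of units is observed, is entirely routine.
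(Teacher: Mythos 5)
First, note that the paper itself offers no proof of this statement: it is posed as a conjecture, supported only by inspection of the list of $\oplus_{x^2+y^2}$-primes. So your attempt stands on its own, and as written it has a genuine gap in Step three. Step one is correct and complete: since $a^2+b^2\ge 2$ for $a,b\in\mathbb{N}$, there are no $\oplus_{x^2+y^2}$-units, the unit clauses in the composite definition are vacuous, and $m$ is $\oplus_{x^2+y^2}$-composite precisely when $m$ is a sum of two positive squares; so the conjecture is indeed the statement that every $N\ge 4$ splits as $p+q$ with neither summand a sum of two positive squares. Step two is also sound as a qualitative claim. The problem is that Steps two and three together only prove the conjecture for all $N\ge N_0$ with an unspecified $N_0$: the implication ``$B(N)/N\to 0$, hence $2B(N)<N-1$ eventually'' is ineffective, and you concede rather than close this gap. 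Until $N_0$ is explicit, the ``finitely many cases $4\le N<N_0$'' cannot be dispatched, so nothing below $N_0$ is actually proved. Worse, the elementary bound you propose converges extremely slowly: $\prod_{i\le k}(1+1/p_i)$ grows roughly like the square root of $\log$ of the largest prime used, and once one also accounts for the error terms in counting integers with prescribed valuation parities (the condition ``$v_p(n)$ even'' is not periodic, so these errors are not negligible), any honestly extracted $N_0$ is enormous and the finite verification is not feasible. An effective form of Landau's theorem would fix this, but that is far heavier machinery than the problem needs.

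The irony is that the two facts you relegate to ``aids'' for the finite check already prove the entire conjecture, with no density argument at all. Since squares are $\equiv 0$ or $1 \pmod 4$, no integer $\equiv 3\pmod 4$ is a sum of two squares, hence every such integer is a $\oplus_{x^2+y^2}$-prime; and $1,3,4,6$ are easily checked to be $\oplus_{x^2+y^2}$-primes (none equals $a^2+b^2$ with $a,b\ge 1$). Now split on $N\bmod 4$: if $N\equiv 0$ write $N=1+(N-1)$; if $N\equiv 2$ write $N=3+(N-3)$; if $N\equiv 3$ write $N=4+(N-4)$; if $N\equiv 1$ and $N\ge 9$ write $N=6+(N-6)$, while $5=1+4$. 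In every case the second summand is $\equiv 3\pmod 4$ and at least $3$, so both summands are $\oplus_{x^2+y^2}$-primes, and all $N\ge 4$ are covered (e.g.\ $4=1+3$, $6=3+3$, $7=4+3$). You should replace Steps two and three by this residue-class argument: it is fully explicit, closes the gap, and turns the paper's conjecture into a theorem with a half-page proof.
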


In our notation this can be written as:
$$\mathrm{If}\,\,\,\, n>1,\,\,\,\, \mathrm{then} \,\,\,\,C^{P_{z_1^2+z_2^2}}_{x_1+x_2}\cap C^{\mathbb{N}}_{n}\neq\varnothing.$$

We see that the introduction of the generalized primes gives us a whole new range of questions to consider.

\subsection{Algebraically Closed Fields}

As seen before, it is possible to consider primes associated to a unary operation. For example, define the unary operation $\oplus^{\mathbb{N}}_{x^2}$ as $\oplus^{\mathbb{N}}_{x^2}a=a^2$. Thus, for any integer of the form $b^2$ we have $b^2=\oplus^{\mathbb{N}}_{x^2}b$ hence any square is $\oplus_{x^2}$-composite. Note that for unary operations over $\mathbb{N}$ there are no units.

If we would be over the real numbers instead of the positive integers, we would have that every non-negative real is a $\oplus^{\mathbb{R}}_{x^2}$-composite. In this case, the $\oplus^{\mathbb{R}}_{x^2}$-primes would be all negative reals. Over the complex numbers, we have that all complex numbers are $\oplus^{\mathbb{C}}_{x^2}$-composite. This point of view provides another way to approach different types of numbers. In particular, the integers can be viewed as the $\oplus^{\mathbb{Q}}_{\frac{x}{y}}$-primes, that is, all the rational numbers which cannot be written as a fraction with $y\neq 1$. Note that the $1$ is a $\oplus^{\mathbb{Q}}_{\frac{x}{y}}$-right-unit.

It also provides another way to define algebraically closed fields.
\begin{proposition}A field $F$ is algebraically closed if and only if for every polynomial $p(x)$ over $F$ we have that $0$ is a $\oplus_{p(x)}$-composite.\end{proposition}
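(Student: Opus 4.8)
The plan is to reduce the statement to the classical characterization of algebraic closure — that every non-constant polynomial over $F$ has a root in $F$ — by unwinding the definition of a $\oplus_{p(x)}$-composite for the unary operation $\oplus_{p(x)}$ given by $\oplus_{p(x)}a=p(a)$. First I would record a simplification that holds because $F$ is a field and hence has at least two elements: for a unary operation $\oplus_{p(x)}$ there are no $[p,1]$-units, since $u$ being such a unit would require $p(u)=s$ for every $s\in F$, forcing $F$ to be a singleton. Consequently every element of $F$ is a non-unit, so the clauses ``not a $[f,j]$-unit'' in the definition of composite are automatically satisfied, and $0$ being a $\oplus_{p(x)}$-composite reduces to the single requirement that there is some $c_1\in F$ with $p(c_1)$ a $\oplus_{p(x)}$-factor of $0$.

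Next I would describe the $\oplus_{p(x)}$-combinations explicitly. Since $p$ is unary, every combination is an iterated tower $p(p(\cdots p(a)\cdots))$ for some leaf $a\in F$ and some number of applications; write $p^{\circ k}(a)$ for the value of the tower of height $k$, with $p^{\circ 0}(a)=a$. The occurrences in such a tower are exactly the intermediate values $p^{\circ 0}(a),p^{\circ 1}(a),\dots,p^{\circ k}(a)$, and a $\oplus_{p(x)}$-representation of $0$ is a tower whose top value is $0$. The key lemma I would then prove is
$$0 \text{ is a } \oplus_{p(x)}\text{-composite} \iff p \text{ has a root in } F.$$
For this, observe that a value $d$ is a $\oplus_{p(x)}$-factor of $0$ precisely when $p^{\circ m}(d)=0$ for some $m\ge 0$: the tower of height $m$ over $d$ realizes $d$ as an occurrence in a representation of $0$, and conversely any occurrence of $d$ in a tower reaching $0$ supplies such an $m$. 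Applying this to $d=p(c_1)$, being composite becomes the existence of $c_1\in F$ and $k\ge 1$ with $p^{\circ k}(c_1)=0$; setting $w=p^{\circ(k-1)}(c_1)$ gives a genuine root $p(w)=0$, and conversely a root $w$ yields $0=p(w)$ as a factor via the height-one tower over $w$.

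With the lemma in hand, the proposition follows by appealing to the classical definition of algebraic closure. For the forward direction, if $F$ is algebraically closed then every non-constant $p$ has a root, so $0$ is a $\oplus_{p(x)}$-composite. For the converse, if $0$ is a $\oplus_{p(x)}$-composite for every such $p$, the lemma supplies a root for each non-constant $p$, which is exactly algebraic closure.

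The main obstacle — and the point I would be careful to flag — is the behavior of constant polynomials, which the classical notion of algebraic closure deliberately excludes. For the zero polynomial, $0$ is trivially a $\oplus_{p(x)}$-composite, but for a nonzero constant $p(x)=c$ one has $p^{\circ k}(c_1)=c\neq 0$ for all $k\ge 1$, so $0$ is never a $\oplus_{p(x)}$-factor of $0$ and $0$ fails to be composite. Hence the biconditional is correct only when $p$ ranges over non-constant polynomials (equivalently, polynomials of degree at least one), and I would state the proposition with that restriction; the remaining content is then the routine translation carried out above.
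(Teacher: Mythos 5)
Your proposal is correct and follows the same route as the paper's own proof: both reduce the statement to the classical fact that a field is algebraically closed exactly when every non-constant polynomial has a root, via the equivalence that $0$ is a $\oplus_{p(x)}$-composite precisely when $p$ has a root in $F$. The paper's proof is much terser, and your extra care pays off in two places. First, for the converse the paper jumps directly from ``$0$ is $\oplus_{p(x)}$-composite'' to ``$0=p(v)$ for some $v\in F$,'' whereas the definition of composite only guarantees that $p(c_1)$ is an occurrence in some representation of $0$, i.e.\ somewhere inside an iterated tower $p^{\circ k}(a)=0$; your step of taking $w=p^{\circ(k-1)}(c_1)$ so that $p(w)=0$ closes this small gap cleanly. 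Second, and more substantively, your caveat about constant polynomials is a genuine correction to the paper: the proposition quantifies over \emph{every} polynomial $p(x)$ over $F$, and the paper's proof asserts that each such $p$ has a root, which is false for a nonzero constant $p(x)=c$ --- there every nontrivial combination equals $c\neq 0$, so $0$ is not a $\oplus_{p(x)}$-composite even when $F$ is algebraically closed. So the statement as printed needs your restriction to non-constant polynomials (the zero polynomial being harmless, as you note); with that restriction your argument is complete and is otherwise the paper's argument made rigorous.
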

\begin{proof} If $F$ is algebraically closed then for each $p(x)$ there is a root $r\in F$ such that $p(r)=0$, but
$0=p(r)=\oplus_{p(x)}r$. Thus $0$ is $\oplus_{p(x)}$-composite since for unary operations there are no units.

Conversely, if for every $p(x)$ we have that $0$ is $\oplus_{p(x)}$-composite, then for some $v\in F$ we have $0=\oplus_{p(x)}v=p(v)$, hence $F$ is algebraically closed.\end{proof}

\subsection{Diophantine Category}
Roughly, elementary number theory can be viewed as the study of integers and formulas which use the usual arithmetic operations. From this comes the concept of the usual primes which give rise to useful tools and deep problems. Since the primes are so fundamental and that we now have a generalized concept of primes it seems worthwhile to consider the theory that would arise from a different set of operations over the integers.

It would be quite some work to formally define what is an elementary number theory associated to a set of operations and it would result in a restrictive concept. The next best thing seems be to the following.

\begin{definition}We will call the \emph{core} of a number theory a finite or infinite collection of operations where each operation is of an arbitrary arity and is over an arbitrary set.\end{definition}

An example of a core is the set of all hyperoperations $\oplus_j$ for $j\in \mathbb{N}$. Here each higher operation depend on the previous, but we do not ask for this condition in every core. A reason is that if we want to study a certain equation consisting of unrelated operations we can still define a core out of those operations and investigate the number theory surrounding that core which could give us interesting insights into the equation.

As seen previously, it is possible to associate to each operation $\oplus_f$ a single set of $\oplus_f$-primes and of $\oplus_f$-composites. Conversely, we could choose a subset of the positive integers and find an operator associated to it, but it need not be unique. For example, take the operator $\oplus^{\mathbb{M}}_f$ defined as $2\oplus^{\mathbb{M}}_f d=3$ for all $d\in\mathbb{M}$ and $b\oplus^{\mathbb{M}}_f c=2$ for all $b,c\in\mathbb{M}$ such that $b\neq 2$, then only $2$ and $3$ are composites. If we define the operator $\oplus^{\mathbb{M}}_g$ as $3\oplus^{\mathbb{M}}_g d=2$ for all $d\in\mathbb{M}$ and $b\oplus^{\mathbb{M}}_g c=3$ for all $b,c\in\mathbb{M}$ such that $b\neq 3$, then also only $2$ and $3$ are composites. This means that we can define two different operators for the set of composites $\{2,3\}$. It would be interesting to find if for some subset of the positive integers there is a unique operation (of a certain type) associated to the subset.

In the following, we briefly consider the context of category theory. For details regarding category theory, \cite{MacLane} is the definitive introduction. If we choose a core, to each operation $\oplus_k$ of the core we can associate the set $C_k$ of all $\oplus_k$-composites. Collecting all those sets and ordering them by inclusions, we build a \textit{composites core category}, which is actually a poset category, where the objects are the sets of $\oplus_k$-composites and the arrows are the set inclusions. In a similar manner, we define the \textit{primes core category} and the \textit{units core category}.

We could now study Diophantine equations by using categorical tools.  For example, in the previous section, we saw that the intersection of composite sets $C_{f}\cap C_{g}$ gives us information about the solutions of the associated diophantine equation. From a categorical point of view this intersection is equivalent to the pullback $C_g\times_{C_g\cup C_f} C_f$. Furthermore, we now have the possibility to investigate the functors between core categories which could be built from different classes of operations such as linear or quadratic operations.

\begin{definition}Let $D$ be a core composed of all polynomials over $\mathbb{Z}$, we will call its composite core category the \emph{composites diophantine category}. In a similar manner, we define the \emph{primes diophantine category} and the \emph{units diophantine category}. \end{definition}

In any Diophantine category, it is interesting to notice that $\mathbb{Z}$ is considered to be a terminal object because there is a unique arrow, or set inclusion, between any subset of $\mathbb{Z}$ and $\mathbb{Z}$. Note that in the composites diophantine category, for each fixed $h\in\mathbb{Z}$, we have that $C^{\mathbb{Z}}_{x+h}$ is a terminal object since $C^{\mathbb{Z}}_{x+h}=\mathbb{Z}$.  This is because any integer $n$ can be written as $n=\oplus^{\mathbb{Z}}_{x+h}n-h$. Similarly, a terminal object in the units diophantine category is given by $U^{\mathbb{Z}}_{x+ky}=\mathbb{Z}$ for each $k\in\mathbb{Z}$.

\section{Conclusion}

Primes have often been seen as mysterious entities and have fascinated many. The generalization of primes has been approached through algebra and algebraic geometry. As seen above, another way to generalize the concept of primes is through the operations themselves. Perhaps that this point of view can eventually be used to include the algebraic view or vice versa.

Regarding the topics presented here, we see that there seem to be many interesting discoveries to be uncovered. One objective is to find a fully generalized `fundamental theorem of hyperarithmetic'. We have seen that the `hyperarithmetics' presented in \ref{hyperarithmetic} can be seen as a core. A step further would be to find the fundamental theorem associated to an arbitrary core. A good understanding of algebraic relations will be needed since generalized distributivity and exponential laws will probably be needed for the proof of such a general theorem.

 A second objective is to clarify the links between Diophantine equations, generalized primes and the theory of Dirichlet series. It would be quite remarkable to move freely between each of these domains, where a problem in one of these domains can be solved in the language of the other two. It is still unclear what will be the input of category theory, but one can hope that its general perspective will help answer deep questions in number theory.


\begin{thebibliography}{9}


\bibitem{Ackermann}Ackermann W, \emph{Zum Hilbertschen Aufbau der reellen Zahlen}. Mathematische Annalen 99, 1928, 118--171.

\bibitem{Clenshaw} Clenshaw C. W. and Olver F. W. J., \emph{Beyond floating point}. Journal of the ACM 31 (2), 1984, 319--328.

\bibitem{Conway}Conway J. H., Guy R. K.,\emph{The Book of Numbers}. Springer-Verlag New York, 1998.


\bibitem{Fulton}Fulton W., \emph{Algebraic Curves}. Benjamin, New York, 1969.

\bibitem{Graham}Graham, R. L. and Rothschild, B. L., \emph{Ramsey's Theorem for n-Parameter Sets}. Transactions of the American Mathematical Society 159, 1971, 257--292.

\bibitem{Hardy} Hardy G. H. and Wright E. M., \emph{An Introduction to the Theory of Numbers}. Oxford University Press, Oxford, Fifth edition, 1979.

\bibitem{Knuth} Knuth D. E., \emph{Mathematics and Computer Science: Coping with Finiteness}. Science 194 (4271), 1976, 1235--1242.

\bibitem{Kunz}Kunz E., \emph{Introduction to Commutative Algebra and Algebraic Geometry}. Birkh\"{a}user, Boston, 1985.

\bibitem{Littlewood} Littlewood J. E., \emph{Large Numbers}.  Mathematical Gazette 32 (300), 1948, 163--171.

\bibitem{MacLane} Mac Lane S., \emph{Categories for the Working Mathematician}. Springer, Second edition, 1998.
\end{thebibliography}
\end{document}